\definecolor{Blue}{rgb}{0.,0.,1}
\newcommand{\R}{\mathbb R}
\newcommand{\N}{\mathbb N}
\newcommand{\cH}{\mathcal{H}}
\newcommand{\cL}{\mathcal{L}}
\newcommand{\G}{{\mathbb G}_{d,n}}
\newcommand{\V}{\|V\|}
\newcommand{\M}{\|M\|}
\newtheorem{theo}{Theorem}[section]
\newtheorem*{theo*}{Theorem}
\newtheorem{prop}[theo]{Proposition}
\newtheorem{lemma}[theo]{Lemma}
\newtheorem{dfn}[theo]{Definition}
\newtheorem*{dfn*}{Definition}
\newtheoremstyle{rmdotless}{}{}{\upshape}{}{\bfseries}{.}{0.5em}{}
\theoremstyle{rmdotless}
\newtheorem{remk}[theo]{Remark}
\DeclareMathOperator{\mdiv}{div}
\DeclareMathOperator*{\argmin}{arg\,min}
\DeclareMathOperator*{\supp}{spt}
\DeclareMathOperator*{\lip}{Lip}
\renewcommand{\phi}{\varphi}
\renewcommand{\epsilon}{\varepsilon}
\newcommand{\e}{\epsilon}
\renewcommand{\G}{G_{d,n}}
\numberwithin{equation}{section} 
\title{ Integral Brakke approximate equality \\for an approximate mean curvature flow}
\author{}
\date{}
\begin{document}
\maketitle
\begin{center}
 {\bf Abdelmouksit Sagueni}\\
Institut Camille Jordan\\
{\it sagueni@math.univ-lyon1.fr}
\end{center}
\begin{abstract}
In this paper we aim to study the consistency of the mean curvature flow via discretization. We will use discretizations by volumetric varifolds, and derive a Brakke approximate equality involving the masses of the volumetric varifolds and their approximate mean curvatures.


\vspace{0.5cm}
{\bf Key words:} Geometric Measure Theory; Varifolds; Volumetric discretization; Mean curvature; Mean curvature flow; Brakke flow.
\end{abstract}

\tableofcontents

\vfill
\noindent\rule{\textwidth}{0.4pt} 
\textbf{Acknowledgments:} I would like to thank my thesis co-advisor, Blanche Buet, for her suggestions and invaluable support.

\newpage

\section{Introduction}
\subsection{Notations and preliminaries}
Throughout the paper, we fix $d,n \in \N$ such that  $1\leq d \leq n$ and we adopt the following notations:
\begin{itemize}
\item $\cL^n$ denotes the $n$-dimensional Lebesgue measure in $\R^n$.
\item $\cH^d$ denotes the $d$-dimensional Hausdorff measure in $\R^n$.
\item $\omega_d$ denotes the volume of the unit ball of $\R^d$. 
\item For a set $A$, $A^{\delta}=\bigcup\limits_{x\in A}B_{\delta}(x)=\lbrace y \in \R^n, d(y,A)<\delta \rbrace$.
 \item For a function $u$, define the supremum norm as: $||u||_{\infty}=\sup\limits_x|u(x)|$.
 \item $C^0, C_c^0$ denote the space of continuous functions, continuous compactly supported functions, respectively.
 \item $C_c^{1}$ denotes the space of  continuously differentiable functions of compact support. We define for $\phi \in C_c^{1}$, $||\phi||_{C^1} := ||\phi||_{\infty}+ ||D\phi||_{\infty}.$
 \item $C_c^{2}$ denotes the space of twice continuously differentiable functions of compact support. We define for $\phi \in C_c^{2}$, $||\phi||_{C^2} := ||\phi||_{\infty}+ ||D\phi||_{\infty}+||D^2\phi||_{\infty}.$
\item For $\Omega \subset \R^n$, $\underset{\Omega}{\lip}(f)$ denotes the Lipschitz coefficient of $f$ on $\Omega$.
\item We fix $\rho$, a $C^2$ nonnegative real valued function supported on $[0,1]$ and $\xi$ a $C^1$ nonnegative real valued function supported on $[0,1]$ , we choose $\xi$ to be positive on $]0,1[$.
\item For $\nu, \mu$ two Radon measures, $\Delta(\nu,\mu)$ denotes the bounded Lipschitz distance between $\nu$ and $\mu$ defined as:
\begin{equation*}
 \Delta(\nu,\mu) := \sup \Big\lbrace \int \phi \,d\nu - \int \phi \,d\mu, \,\,||\phi||_{\infty}+ \lip(\phi) \leq 1 \Big\rbrace.
\end{equation*}
\item $\G$ is the Grassmannian manifold of $d$-dimensional vector subspaces of $\R^n$.
\item By a closed submanifold, we mean a compact boundaryless submanifold.
\end{itemize}

\subsection{Mean curvature flow, Brakke flow}
Given a closed $C^2$ submanifold $M$ of dimension $d$ in $\R^n$, the mean curvature flow of $M$ is a geometric flow starting from $M$ and satisfying the equation $\partial M(t) = H(M(t))$, where $H(M(t))$ stands for the mean curvature vector of $M$. The mean curvature vector is characterized by the formula: 
\begin{equation}
 \frac{d}{dt}(\cH^d(M+tX))_{|t=0} = -\int_{M} H(y,M) \cdot X(y) \, d\cH^d(y)
\end{equation}
for any $X \in C^1_c(\R^n,\R^n)$. The formula translates the fact that the mean curvature is the direction corresponding to the steepest descent of the area functional. Alternatively, if we define for any $C^1$ vector $X$ and  any $d$-plane $S$, 
\begin{equation}
\mdiv_S(X) :=\sum\limits_{i=1}^d \langle \nabla(X\cdot \tau_i), \tau_i \rangle                                                                                                                                                                                                                                                                                                                                                                                                                                                                                                  \end{equation}
where $\lbrace \tau_i \rbrace_{i=1}^d$ is an orthonormal basis of $S$, the mean curvature vector takes the following form:
\begin{equation}
  H(x,M) := \sum\limits_{j=1}^{n-d} \mdiv_{T_xM}(\nu_j)\nu_j,
\end{equation}
for any $\lbrace \nu_j \rbrace_{j=1}^{n-d}$, orthonormal basis of $T^{\perp}_xM$.\\
By the theory of partial differential equations, the mean curvature flow equation admits a unique solution, defined for an interval of time $[0,T), T \in (0,\infty)$ depending on the geometry of the initial submanifold. The flow has been widely studied in the differential geometric setting, we refer the reader to the works of Grayson \cite{gra} and Huisken et al. \cite{hui}, \cite{eckhui} . In general, solving the mean curvature flow equation for $T=+\infty$ is not possible and the flow develops singularities at a finite time. To extend the definition of the mean curvature flow beyond singularities, several methods were proposed, we mention: \cite{brakke}, \cite{cgg}, \cite{evsp1}, \cite{Ilmanen} and \cite{bel}), the Brakke's method is the one that we will be dealing with in this paper.\\
The idea behind Brakke's solution to the mean curvature flow is to seek an integral version of the mean curvature equation (see \cite[section 2.1]{ton} for more details). In the measure theoretic setting, the quantities 
\begin{equation}
 \Big\lbrace \int_{M(t)} \phi(y) \, d\cH^d(y),  \,\, \phi \in C_c(\R^n,\R^+) \Big\rbrace_t.
\end{equation}
fully characterizes the family $\lbrace M(t) \rbrace_t$. Differentiating  when $\phi\in C_c^1(\R^n,\R^+)$ gives:
\begin{equation}
 \frac{d}{dt}  \int_{M(t)} \phi(y) \,d\cH^d(y) = \int_{M(t)} - \phi(y) |H(y,M(t))|^2 +\nabla\phi(y) \cdot H(y,M(t)) \, d\cH^d(y).
\end{equation}
Integrating on $[t_1,t_2] \subset [0,T)$, we get:
\begin{equation}\label{weakmcfequation}
 \int_{M(t_1)}\phi(y) \, d\cH^d(y)  - \int_{M(t_2)}\phi(y) \, d\cH^d(y) = \int_{t_1}^{t_2}\int_{M(t)} - \phi(y) |H(y,M(t))|^2 + \nabla\phi(y) \cdot H(y,M(t)) \, d\cH^d(y) \,dt.
\end{equation}
for any $\phi\in C_c^1(\R^n, \R^+)$ and $0\leq t_1 \leq t_2 < T$. One could prove that (see \cite[chapter 2]{ton}), any family of Radon measures $\lbrace M(t) \rbrace_t$ whose space-time track is $C^2$, satisfying \eqref{weakmcfequation} is a classical mean curvature flow, this proves the consistency in the regular case. Brakke, in his ingenious work \cite{brakke} showed that, starting from any rectifiable varifold $V_0$, there exists a family of rectifiable varifolds $V_t, t\in [0,+\infty]$ satisfying the "Brakke inequality" \eqref{weakmcfequation}. We note that Brakke worked with an inequality instead $(\leq)$ and the reason behind is beyond the scope of this paper.
\subsection{Varifolds, discrete varifolds and approximation}
The central tool used in our work is the notion of varifold.
\begin{dfn}(Varifold)\label{varifolds}
A $d$-varifold in $\R^n$ is a nonnegative Radon measure on $\R^n \times \G$. The space of $d$-varifolds in $\R^n$ is denoted $V_d(\R^n)$.
\end{dfn}
We also consider the projection measure on $\R^n$, we call it the mass measure of the varifold $V$ and we denote it by $||V||$. It is defined as follows:
\begin{equation}
||V||(\phi):=\int_{\R^n\times \G}\phi(x) \,dV(x,S), \quad \forall \phi \in C_c(\R^n,\R^+).
\end{equation}
In other words, for every Borel set $A\subset \R^n$, we have: $||V||(A)=V(A\times \G)$.\\
In the following, we list some of the most used/familiar types of varifolds.
\begin{enumerate}
 \item Smooth varifolds: to a $d$-dimensional submanifold $M$ in $\R^n$, we associate the varifold:
 \begin{equation}
   \phi \mapsto  \int_{M} \phi(y,T_yM) \,d\cH^d(y) \quad  \forall \phi\in C^0_c(\R^n \times \G,\R^+).
 \end{equation}
 Then, the associated mass varifold is defined as: 
 \begin{equation}
 \phi \mapsto  \int_{M} \phi(y) \,d\cH^d(y) \quad  \forall \phi\in C^0_c(\R^n,\R^+).
 \end{equation}
 \item Rectifiable varifolds: to a $d$-dimensional rectifiable set $M$ (see \cite[chap 3]{simon} for definition) in $\R^n$, and a $\cH^d$- integrable nonnegative function $\theta$ on $\supp M$ , we associate the varifold 
 \begin{equation}
   \phi \mapsto \int_{M} \phi(y,T_yM) \theta(y)\, d\cH^d(y) \quad  \forall \phi\in C^0_c(\R^n \times \G,\R^+).
 \end{equation}
  Then, the associated mass varifold is defined as: 
\begin{equation}
   \phi \mapsto \int_{M} \phi(y) \theta(y) \,d\cH^d(y) \quad  \forall \phi\in C^0_c(\R^n,\R^+).
 \end{equation}
  
 \item Point cloud varifolds: to a distribution of points $\lbrace x_j \rbrace_{j=1}^{N}$ in $\R^n$, $d$-tangents $\lbrace P_j \rbrace_{j=1}^{N}$ in $\G$ and masses $\lbrace m_j \rbrace_{j=1}^{N}$ in $\R^+$, we associate the varifold:
 \begin{equation}
  \phi \mapsto  \sum\limits_{j=1}^{N} m_j \phi(x_j,P_j)  \quad  \forall \phi\in C^0_c(\R^n \times \G,\R^+).
 \end{equation}
Then, the associated mass varifold is defined as: 
\begin{equation}
  \phi \mapsto  \sum\limits_{j=1}^{N} m_j \phi(x_j)  \quad  \forall \phi\in C^0_c(\R^n,\R^+).
\end{equation}
\end{enumerate}
We recall the notion of Ahlfors regularity in the setting of varifolds.
\begin{dfn}\label{dfnahlfors}(Ahlfors regularity)
Let $V \in V_d(\R^n)$, we say that $V$ is Ahlfors regular if its mass measure $||V||$ is, i.e. there exits $C_0 > 1$, $r_0 > 0$ such that for all $x\in \supp \V$ and $0 < r \leq r_0$,
\begin{equation}\label{ahlfors}
C_0^{-1} r^d \leq \V(B(x,r)) \leq r^d C_0.
\end{equation}
\end{dfn}
We note that $r$ can be chosen as large as needed: if condition \eqref{ahlfors} holds for some $r_0 > 0$ then it holds for any $r \geq r_0$ possibly adapting the regularity constant $C_0$.

\subsubsection{Approximation by discrete varifolds}
Volumetric varifolds were introduced to discretize submanifolds in the spirit of varifolds, it was proved to be suitable for discretizing submanifolds of any dimension and co-dimension and to handle the presence of singularities. Here, we present some of the results of \cite{buet} on the discretization and the approximation by volumetric varifolds.\\ 
Given $\Omega$, a bounded open set of $\R^n$ and a mesh $\mathcal{K}$ of size $h$, we understand by that: a collection of cells covering $\Omega$ where $h$ bounds the diameter of any of the cells. Given a $d$-submanifold  $M$ in $\Omega$, we define a volumetric discretization $V_h$ of $M$ as follows:
\begin{equation}\label{dfnvolumetric}
V_h = \sum\limits_{K\in \mathcal{K}} \frac{m_K}{|K|}\cL^n_{|K}\otimes\delta_{P_K}   \quad 
\end{equation}
\text{where:}$\,\,|K| =  \cL^n(K),\,\,  m_K = \cH^d(M\cap K),\,$
  and $\displaystyle P_K \in \argmin\limits_{S\in\G}\int_{M \cap K}|T_yM-S|\,d\cH^d(y)$
for any cell $K$ in $\mathcal{K}$.
\begin{figure}
\begin{center}
\includegraphics[scale=0.7]{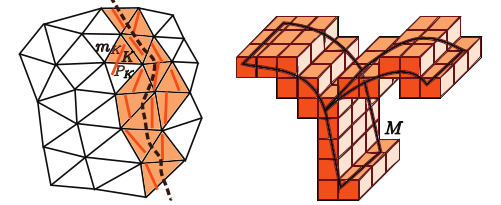}
\caption{\cite{buet} Approximation by volumetric varifolds.}
\end{center}
\end{figure}
The following proposition results from \cite[Theorem 2.1]{buet} when the approximated rectifiable varifold is smooth.
\begin{prop}\label{volumapp}
 Let $\Omega$ be a bounded set of $\R^n$, $\mathcal{K}_h$ a mesh of $\Omega$. Let  $M\subset \Omega$ be a $d$-submanifold and $V_h$ be a volumetric discretization of $M$ defined as in \eqref{dfnvolumetric}. We have, for any  Lipschitz function $\phi$ on $\Omega$:
 \begin{equation}
  \Big| ||M||(\phi) - ||V_h||(\phi) \Big| \leq h \lip\limits_{\Omega}(\phi) ||M||(\supp(\phi)).
 \end{equation}
In addition, for $C > 0$ such that,
\begin{equation}
 \big| T_xM -T_yM \big| \leq C|x-y| \quad \forall x,y \in \supp M,
\end{equation}
and if we denote $\Pi:\Omega \times \G \rightarrow \R^n, (y,S) \mapsto y$, for every Lipschitz function $\phi$ on $\Omega\times\G$, we have 
\begin{equation}
 \Big| M(\phi) - V_h(\phi) \Big| \leq h \lip\limits_{\Omega}(\phi)(1+2C) ||M||(\Pi(\supp(\phi))).
\end{equation}
\end{prop}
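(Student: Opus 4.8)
The plan is to estimate both differences cell by cell, exploiting that every cell $K\in\mathcal K$ has $\diam(K)\le h$, and then to sum the resulting pointwise bounds over the cells that meet the support. For the mass estimate, the starting point is the identity $\frac{m_K}{|K|}\int_K\phi(x)\,dx=\int_{M\cap K}\big(\tfrac1{|K|}\int_K\phi(x)\,dx\big)\,d\cH^d(y)$, which rewrites the difference as
\[
\|M\|(\phi)-\|V_h\|(\phi)=\sum_{K\in\mathcal K}\int_{M\cap K}\frac1{|K|}\int_K\big(\phi(y)-\phi(x)\big)\,dx\,d\cH^d(y).
\]
Since $x,y\in K$ forces $|x-y|\le h$, the inner integrand is bounded by $h\,\lip_\Omega(\phi)$, so the $K$-th summand is at most $h\,\lip_\Omega(\phi)\,m_K$; moreover it vanishes unless $\phi\not\equiv 0$ on $K$, i.e.\ unless $K$ meets $\supp\phi$. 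Summing over those finitely many cells, which together cover $M\cap\supp\phi$ (up to an $h$-neighbourhood that I suppress), and recalling $m_K=\cH^d(M\cap K)$, gives $\big|\|M\|(\phi)-\|V_h\|(\phi)\big|\le h\,\lip_\Omega(\phi)\,\|M\|(\supp\phi)$.

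For the finer estimate on $\Omega\times\G$ the same manipulation yields $M(\phi)-V_h(\phi)=\sum_{K}\int_{M\cap K}\frac1{|K|}\int_K\big(\phi(y,T_yM)-\phi(x,P_K)\big)\,dx\,d\cH^d(y)$, so it is enough to bound $|\phi(y,T_yM)-\phi(x,P_K)|$ for $x\in K$ and $y\in M\cap K$. I would split this as $\big(\phi(y,T_yM)-\phi(x,T_yM)\big)+\big(\phi(x,T_yM)-\phi(x,P_K)\big)$: the first term is at most $h\,\lip_\Omega(\phi)$ since $|x-y|\le h$, and the second is at most $\lip_\Omega(\phi)\,|T_yM-P_K|$. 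So the whole estimate reduces to showing $|T_yM-P_K|\le 2Ch$ for every $y\in M\cap K$; establishing this is the one genuinely geometric step, and the part I expect to be the main obstacle.

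To prove that bound I would use the minimality defining $P_K$: for any $z\in M\cap K$ the competitor $S=T_zM$ gives, via the hypothesis $|T_xM-T_yM|\le C|x-y|$,
\[
\int_{M\cap K}|T_wM-P_K|\,d\cH^d(w)\le\int_{M\cap K}|T_wM-T_zM|\,d\cH^d(w)\le C\int_{M\cap K}|w-z|\,d\cH^d(w)\le C\,h\,m_K.
\]
Then, for $y\in M\cap K$, averaging the triangle inequality $|T_yM-P_K|\le|T_yM-T_wM|+|T_wM-P_K|$ over $w\in M\cap K$ against $\tfrac1{m_K}\,d\cH^d$ and using the Lipschitz bound once more on the first term (again $|y-w|\le h$) gives $|T_yM-P_K|\le Ch+Ch=2Ch$. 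Feeding this back in, $|\phi(y,T_yM)-\phi(x,P_K)|\le h\,(1+2C)\,\lip_\Omega(\phi)$, and summing over the cells meeting $\supp\phi$ as in the first part produces $\big|M(\phi)-V_h(\phi)\big|\le h\,\lip_\Omega(\phi)\,(1+2C)\,\|M\|(\Pi(\supp\phi))$. (Both estimates are the smooth-varifold specialisation of \cite[Theorem 2.1]{buet}; the cell-by-cell route above is a self-contained alternative.)
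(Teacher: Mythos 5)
Your cell-by-cell argument is correct and is, in substance, the proof of \cite[Theorem 2.1]{buet} that the paper invokes without reproducing: the identity $\frac{m_K}{|K|}\int_K\phi\,dx=\int_{M\cap K}\bigl(\frac{1}{|K|}\int_K\phi\,dx\bigr)\,d\cH^d(y)$, the per-cell bound $h\lip_\Omega(\phi)\,m_K$, and the minimality-plus-averaging step giving $|T_yM-P_K|\le 2Ch$ are all exactly the right ingredients, and the last of these (the ``genuinely geometric step'' you single out) is carried out correctly. The one point you wave at --- ``up to an $h$-neighbourhood that I suppress'' --- deserves to be made explicit, because it is not a defect of your proof but of the statement as written: a cell $K$ can meet $\supp\phi$ while $M\cap K$ is disjoint from $\supp\phi$, in which case its contribution $\bigl|\frac{m_K}{|K|}\int_K\phi\bigr|\le h\lip_\Omega(\phi)\,m_K$ is controlled by mass of $M$ lying \emph{outside} $\supp\phi$; taking $M$ disjoint from $\supp\phi$ but within distance less than $h$ of it one gets $\|V_h\|(\phi)>0$ while $\|M\|(\supp\phi)=0$, so the inequality with $\|M\|(\supp\phi)$ on the right cannot hold verbatim. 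What your argument actually proves, and what is true, is
\begin{equation*}
\Bigl|\,\|M\|(\phi)-\|V_h\|(\phi)\,\Bigr|\;\le\;h\,\lip\limits_{\Omega}(\phi)\,\|M\|\bigl((\supp\phi)^{h}\bigr),
\end{equation*}
and likewise with $\Pi(\supp\phi)^h$ in the second estimate. This is harmless for the paper, which only ever applies the proposition with the crude bound $\|M\|(\R^n)$ on the right (e.g.\ in Proposition \ref{mtov1}), but you should state the enlarged support rather than suppress it.
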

We note that in the original statement of \cite[Theorem 2.1]{buet}, $\beta $: the Hölder coefficient of the map $y \mapsto T_yM$ is taken in $(0,1)$, however, same computations involved in the proof shows the result for any $\beta \in \R^+$ (in particular, for $\beta =1$).

\subsubsection{First variation, approximate mean curvature}
To study the  behavior of the measure $V$ w.r.t infinitesimal displacements, we introduce a notion called the \textit{first variation} of the area.
\begin{dfn}[First variation]
Let $X \in C_c^1(\R^n, \R^n)$, $V \in V_d(\R^n)$, we denote 
\begin{equation}\label{firstvar}
\delta V (X) : = \int_{\R^n \times \G} \mdiv_S(X)(x)\, dV(x,S).
\end{equation}
\end{dfn}
The map 
\begin{equation}
 \delta V(\cdot) : C_c^1(\R^n,\R^n) \mapsto \R
\end{equation}
is called the \textit{first variation} of the varifold $V$.\\
If $M$ is a closed $C^2$-submanifold we have: 
\begin{equation}
 \frac{d}{dt}(\cH^d(M+tX))_{|t=0} = \int_{M} \mdiv_{T_yM}(X)(y) \, d\cH^d(y)
\end{equation}
this justifies the naming and the definition of $\delta V$. Moreover, one has:
\begin{equation}
 \int_{M} \mdiv_{T_yM}(X)(y) \, d\cH^d(y) = -\int_{M} H(y,M) \cdot X(y) \, d\cH^d(y).
\end{equation}
Thus, given a varifold $V$, if $\delta V$, seen as a vectorial measure on the space $C^1_0(\R^n,\R^n)$ is $||\cdot||_{\infty}$-bounded, then  by Riesz representation Theorem and the Radon-Nikodym decomposition, there exists a vector $\in L^1 (d||V||)^n$ that we denote $H_V$ such that: for any $X\in C_0(\R^n,\R^n)$, we have:
\begin{equation}
 \delta V(X)=-\int_{\R^n} X(y)\cdot H_V d||V||(y) + \delta V_{s}(X)
\end{equation}
where $\delta V_{s}$ is singular w.r.t $d||V||$, we write $\displaystyle H_V = \frac{\delta V}{||V||}$. $H_V$ is called the generalized mean curvature of the varifold $V$. In general, the measure $\delta V$ is not necessarily bounded, for instance when $V$ is a point clouds varifold. One way to define an approximation of the generalized mean curvature is the following (see \cite{blm1}), consider two smooth nonnegative functions $\rho$ and $\xi$ compactly supported  on $[0,1]$, for $\e \in (0,1]$ define for any $r\in[0,1]$
\begin{equation}
 \rho_{\e}(r) = \e^{-n} \rho\left(\frac{r}{\e}\right), \quad \xi_{\e}=\e^{-n}\xi\left(\frac{r}{\e}\right),
\end{equation}
for any $X\in C_c^1(\R^n,\R^n)$
\begin{equation}
 \left( \delta V \ast \rho_{\e} \right) (X) :=  \delta V(X\ast \rho_{\e}) = \int_{\R^n} \int_{\R^n\times\G} S \left(\nabla\rho_{\e}(z-y) \right) dV(z,S) \cdot X(y)dy,
\end{equation}
we represent the regularized first variation by the vectorial function:
\begin{equation}
 \delta V \ast \rho_{\e}(y) :=  \int_{\R^n\times\G} S \left(\nabla\rho_{\e}(z-y) \right) dV(z,S).
\end{equation}
For any $\phi \in C_c^0(\R^n,\R^+)$
\begin{equation}
\left(  ||V||\ast\xi_{\e} \right) (\phi) := ||V||(\phi\ast\xi_{\e})=\int_{\R^n}\int_{\R^n}\xi_{\e}(z-y)d||V||(z) \phi(y) dy,
\end{equation}
we represent the regularized mass measure by the real valued function:
\begin{equation}
 ||V||\ast\xi_{\e}(y) := \int_{\R^n}\xi_{\e}(z-y)d||V||(z).
\end{equation}
Then, following \cite{blm1}, an approximate mean curvature for $V$ can be defined as:
\begin{equation}
 H_{\rho,\xi,\e}^{V}(y) = -\frac{C_{\rho}}{C_{\xi}}\frac{\delta V \ast \rho_{\e}(y)}{||V||\ast\xi_{\e}(y)} \quad \forall y \in \R^n
\end{equation}
where,
\begin{equation}
 C_{\rho}=d\omega_d\int_0^1 \rho(r)r^{d-1}dr, \quad \text{and} \quad C_{\xi}=d\omega_d\int_0^1 \xi(r)r^{d-1}dr.
\end{equation}
The previous definition of the approximate mean curvature vector enjoys stability and convergence properties (see \cite{blm1},\cite{br}), it is the reason why we will work with the definition in this paper. For simplicity, we will write $H_{\e}(\cdot,V)$ to denote $H_{\rho,\xi,\e}^{V}(\cdot)$ and we will assume that $C_{\rho}=C_{\xi}=1$.
\subsection{Goal of the paper}
We will investigate the consistency of the mean curvature flow via space-discretization by volumetric varifolds. We will consider a smooth mean curvature flow $M(t)$ defined for $t\in[0,T]$, a discretization $V_h(t)$ (of $M(t)$ for every $t\in[0,T]$) and derive a Brakke approximate equality satisfied by the discretization. Concretely, we will replace $M(t)$ by $V_h(t)$ in the equality \eqref{weakmcfequation} and exhibit the resulting error term.
We will consider an ``approximate'' mean curvature vector (instead of the mean curvature) of the discretization $V_h(t)$ since it makes more sense in the discrete setting.

\section{Integral Brakke approximate equality for the discretization}
We know that if a $C^2$-submanifold is contained in a convex domain $\Omega$ then, by the avoidance principle, its mean curvature flow is also contained in $\Omega$ (see \cite[Theorem 3.8]{brakke}). We assume the domain $\Omega$ to be convex to avoid changing the domain/meshing at every time. In the following Theorem we will measure how far a volumetric discretization of a mean curvature flow is from satisfying a Brakke equality:
\begin{theo}\label{thmappbrakke} Let $\e\in(0,1)$, let  $\Omega$ be a convex bounded open domain of $\R^n$. Let $M$ be a $C^3$ closed $d$-submanifold in $\Omega$ and $M(t)$ be its mean curvature flow defined on $[0,T]$. For all $t\in |0,T]$, let $V_h(t)$ be a volumetric discretization of $M(t)$ of parameter $h$ defined as in \eqref{dfnvolumetric}.\\ Moreover, if we assume that  $h$ satisfies: $2h\leq \gamma \e$, and $\gamma$ satisfies:
\begin{itemize}
\item $\gamma \leq (8(1+C_0^{2/d}))^{-1} \quad C_0 $ bounds the Ahlfors regularity constant of $M(t)$ for all $t\in[0,T]$.
\item $ \gamma \leq \max\limits_{x\in M(t), \, t\in [0,T]}(\lambda(x))^{-1}$ \quad \text{where $\lambda(x)$ is the maximal principal curvature at $x$}.
\item $ \beta \geq \gamma 2^{3d}C_0^2(\lip(\xi)+1)$, \quad where $\displaystyle \beta = \min \big\lbrace \xi(s) \big| s\in \big[ \frac{C_0^{-2/d}}{4}, \frac{1}{2} \big]\big\rbrace >0$.
\end{itemize}
then, we have the following estimate,
\begin{equation}\label{brakkevolum}
\begin{split}
   & \Big|  ||V_h(t_2)||(\phi)- ||V_h(t_1)||(\phi) + \int_{t_1}^{t_2}\int_{\R^n} \phi(x) |H_{\e}(x,V_h(t))|^2 - \nabla\phi(x)\cdot H_{\e}(x,V_h(t)) d||V_h(t)||(x)dt \Big| 
   \\& \leq 2 \lip(\phi) \max\limits_{t\in \lbrace t_1,t_2 \rbrace}\Delta(M(t),V_h(t)) + ||\phi||_{\infty} C_1 \e(||M(t_1)||(\R^n)-||M(t_2)||(\R^n)) + ||\phi||_{C^2} C (t_2-t_1) ( \e +  \frac{h}{\e^3}) 
\end{split}
\end{equation}
for every $\phi \in C_c^2(\R^n,\R^+)$ , $0 \leq t_1 \leq t_2 \leq T$, where $C$ depends on $n$, $d$, $\gamma$, $\beta$, $||\rho||_{C^2}$, $||\xi||_{C^1}$, $C_0$, $C_1$(defined in Lemma \ref{hhepsilon1}) and $C_2$, where $C_2$ bounds the Lipschitz constants of the maps $y\mapsto T_yM(t)$ for all $t\in[0,T]$.
\end{theo}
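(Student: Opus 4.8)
The strategy is to interpolate between the exact (continuous) Brakke equality \eqref{weakmcfequation} satisfied by the smooth flow $M(t)$ and the approximate equality \eqref{brakkevolum} we want for the discretization $V_h(t)$, controlling the discrepancy term by term. Concretely, I would introduce the quantity
\[
\mathcal{B}[\mu,K](t_1,t_2) := \|\mu(t_2)\|(\phi) - \|\mu(t_1)\|(\phi) + \int_{t_1}^{t_2}\!\!\int_{\R^n} \phi\,|K(\cdot,t)|^2 - \nabla\phi\cdot K(\cdot,t)\,d\|\mu(t)\|\,dt,
\]
so that \eqref{weakmcfequation} says $\mathcal{B}[M, H(\cdot,M(\cdot))](t_1,t_2)=0$, and the goal is to bound $\mathcal{B}[V_h, H_\e(\cdot,V_h(\cdot))]$. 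Writing the difference as a telescoping sum, one isolates four sources of error: (i) replacing $\|M(t_i)\|(\phi)$ by $\|V_h(t_i)\|(\phi)$ at the two endpoints — this is handled directly by Proposition~\ref{volumapp} (or by the bounded-Lipschitz distance $\Delta(M(t_i),V_h(t_i))$, which is the form appearing in the statement); (ii) replacing the true mean curvature $H(\cdot,M(t))$ by the approximate curvature $H_\e(\cdot,V_h(t))$ inside the time integral; (iii) replacing $d\|M(t)\|$ by $d\|V_h(t)\|$ inside the time integral; and (iv) a compatibility error coming from the fact that along the exact flow one has the extra identity $\frac{d}{dt}\|M(t)\|(\phi)=\int -\phi|H|^2+\nabla\phi\cdot H\,d\cH^d$, whereas for $V_h(t)$ no such exact differential identity is available, so one must compare $\frac{d}{dt}\|V_h(t)\|(\phi)$ with the integrand.

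The heart of the matter is steps (ii)–(iv), and these are where the hypotheses on $\gamma$, $\beta$, the Ahlfors constant $C_0$, and the curvature bound enter. I expect the paper has already proved (the referenced Lemma~\ref{hhepsilon1} and companions) quantitative estimates of the form $\|H_\e(\cdot,V_h(t)) - H(\cdot,M(t))\|$ on $\supp\phi$, with an error of order $\e + h/\e^{?}$; the constraints $2h\le\gamma\e$ and the lower bound $\beta>0$ on $\xi$ over the annulus $[\tfrac14 C_0^{-2/d},\tfrac12]$ are exactly what guarantees the denominator $\|V_h(t)\|\ast\xi_\e(y)$ stays bounded below by a positive constant times $\e^{d-n}$ (comparable to $\|M(t)\|\ast\xi_\e(y)$), so that the quotient defining $H_\e$ is stable; the curvature bound $\gamma\le(\max\lambda)^{-1}$ ensures the relevant balls see a single sheet of $M(t)$ and that $M(t)$ genuinely looks $d$-dimensional at scale $\e$. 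I would plug these pointwise curvature estimates into the quadratic term $\phi|H_\e(\cdot,V_h)|^2$ using $|a|^2-|b|^2=(a-b)\cdot(a+b)$ together with the uniform bound $\|H(\cdot,M(t))\|_\infty\le\Lambda$ (finite since $M\in C^3$ and the flow is smooth on the compact interval $[0,T]$), and into the linear term $\nabla\phi\cdot H_\e$ directly; the mass-swap error (iii) is again Proposition~\ref{volumapp} applied to the (Lipschitz, since $M(t)$ and $\phi$ are smooth and $H_\e$ is controlled) integrand, contributing the $h/\e^3$ Lipschitz-constant growth; and for (iv) I would integrate the exact identity for $M(t)$ over $[t_1,t_2]$ and subtract, so that what remains is precisely a combination of (i)–(iii) plus the $\e$-consistency error for $H_\e$ on the smooth flow — this last being where the $\e$ term and the telescoping $\|M(t_1)\|(\R^n)-\|M(t_2)\|(\R^n)$ (a manifestly nonnegative quantity, monotone under MCF) naturally appear, via bounding $\int_{t_1}^{t_2}\int -\phi|H|^2\,d\cH^d\,dt$ and recognizing it as $\le C\e$ times that mass drop after accounting for the regularization.

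The main obstacle, and the step I would spend the most care on, is the uniform lower bound on the regularized mass $\|V_h(t)\|\ast\xi_\e(y)$ for $y$ near $\supp M(t)$ — everything downstream (stability of the quotient $H_\e$, the $h/\e^3$ rather than a worse power, the fact that the constant $C$ can be taken uniform in $t\in[0,T]$) rests on it. This requires combining: the lower Ahlfors bound $C_0^{-1}r^d\le\|M(t)\|(B(x,r))$ uniformly in $t$ (which itself needs an argument that the Ahlfors constant of the flow is controlled on $[0,T]$, presumably because $M\in C^3$ and $T$ is before the first singular time), the closeness of $\|V_h(t)\|$ to $\|M(t)\|$ from Proposition~\ref{volumapp} with the budget $2h\le\gamma\e$ ensuring $h$ is a small fraction of the mollification scale, and the positivity of $\xi$ on $]0,1[$ localized to the annulus where $\beta$ is defined. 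Once that lower bound is secured with an explicit constant depending only on $(n,d,\gamma,\beta,C_0)$, the remaining estimates are routine applications of the product/quotient rules for the error and of Proposition~\ref{volumapp}, and one assembles the three displayed error contributions on the right-hand side of \eqref{brakkevolum} by collecting, respectively, the endpoint bounded-Lipschitz terms, the mass-drop term with its $\e$ weight, and the time-integrated $\e + h/\e^3$ terms carrying the $\|\phi\|_{C^2}$ norm (the second derivatives of $\phi$ entering through $\nabla\phi\cdot H_\e$ after an integration by parts or through the Lipschitz constant of the integrand on $\Omega\times\G$).
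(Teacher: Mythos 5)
Your proposal follows essentially the same route as the paper: the same telescoping decomposition (endpoint mass swap via the bounded Lipschitz distance, consistency $H\to H_\e(\cdot,M(t))$ producing the $\e$ and mass-drop terms, measure swap $d\|M(t)\|\to d\|V_h(t)\|$ via Proposition~\ref{volumapp} with the $\e^{-3}$ Lipschitz growth of the integrand, and stability $H_\e(\cdot,M(t))\to H_\e(\cdot,V_h(t))$ resting on the lower bound for the mollified mass), and you correctly identify that lower bound --- obtained from Ahlfors regularity, the positivity constant $\beta$ of $\xi$ on the annulus, and the budget $2h\le\gamma\e$ --- as the crux, exactly as in Lemma~\ref{hebounds} and Proposition~\ref{mtov2}. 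The only cosmetic difference is that your item (iv) is vacuous (it collapses into (i)--(iii) plus the consistency error, as you yourself note), and the paper additionally inserts a cutoff function in Proposition~\ref{mtov1} to localize the Lipschitz estimate to a $\gamma\e$-neighborhood of $M(t)$, a technical point your ``for $y$ near $\supp M(t)$'' restriction implicitly covers.
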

\begin{remk}
 The mean curvature flow is continuous in time w.r.t the $C^{3}$-distance on the space of $d$-submanifold of $\R^n$(see \cite[chapter 3]{eidel}), thus, one can bound uniformly on time, thanks to the compactness of $[0,T]$, the constants: $C_0,C_2$ and the maximum value of the principal curvature. The constant $C_1$ also evolves continuously w.r.t to the $C^{3}$-distance on the space of $d$-submanifolds of $\R^n$, as a consequence, it also can be bounded uniformly on time.
\end{remk}
\begin{remk}\label{thmappbrakke1}
If we ignore the smallness of the right hand side of \eqref{brakkevolum} with respect to the size of the interval $[t_1,t_2]$ we get a weaker (but simpler) estimate:
 \begin{equation*}
  \begin{split}
   & \Big|  ||V_h(t_2)||(\phi)- ||V_h(t_1)||(\phi) + \int_{t_1}^{t_2}\int_{\R^n} \phi(x) |H_{\e}(x,V_h(t))|^2 - \nabla\phi(x)\cdot H_{\e}(x,V_h(t)) d||V_h(t)||(x)dt \Big|
   \\& \leq ||\phi||_{C^2} C' ( \e +  \frac{h}{\e^3}) \quad \text{for} \,\,C'=||M(0)||(\R^n)(2+C_1)+CT,
\end{split}
\end{equation*}
where we used: $\Delta(M(t),V_h(t))\leq h||M(t)||(\R^n)$ and $||M(t)||(\R^n) \leq ||M(0)||(\R^n)$. 
\end{remk}
The proof of this theorem is broken down into several steps, we start with the following lemma which results from [Proposition 3.3 \cite{br}] (see also \cite[paragraph 5]{blm1}).
\begin{lemma}\label{hhepsilon1}
Let $\e\in(0,1)$, and $M$ any $C^3$ closed  $d$-submanifold of $\R^n$. There exists a constant $C_1$ depending on the $C^3$ norm of $M$ (seen locally as a graph over its tangent space) such that: for any $x\in\R^n$, 
 \begin{equation}
      \big| H(x,M) - H_{\e}(x,M) \big| \leq C_1 \e,
 \end{equation}
 \end{lemma}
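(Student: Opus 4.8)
\textbf{Proof proposal for Lemma \ref{hhepsilon1}.}

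The plan is to compare the approximate mean curvature $H_\e(x,M)$ to the exact mean curvature $H(x,M)$ by working locally. Since $M$ is a closed $C^3$ submanifold, it is compact, so by a standard covering argument it suffices to produce the estimate with a constant that is uniform over $x$ provided we have a uniform local graph description: there is $r_0>0$ such that for every $p\in M$, $M\cap B(p,r_0)$ is the graph of a $C^3$ map $u_p:T_pM\cap B_{r_0}\to T_p^\perp M$ with $u_p(0)=0$, $Du_p(0)=0$, and $\|u_p\|_{C^3}\leq K$ for a constant $K$ depending only on $M$. The constant $C_1$ will be built out of $K$, $r_0$, $d$, $n$, and the fixed kernels $\rho,\xi$.

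First I would recall the definitions. We have
\begin{equation*}
 H_\e(x,M) = -\frac{\delta\|M\|\!\ast\!\rho_\e(x)}{\|M\|\!\ast\!\xi_\e(x)}, \qquad \delta V\!\ast\!\rho_\e(x) = \int_{\R^n\times\G} S\big(\nabla\rho_\e(z-x)\big)\,dV(z,S),
\end{equation*}
with $C_\rho=C_\xi=1$. The two quantities to control are the numerator and the denominator. For the denominator, I would show $\|M\|\!\ast\!\xi_\e(x) = \cH^d(M\cap B_\e(x))$-weighted average of $\xi_\e$, and using the local graph expansion (area of $M$ inside $B_\e(x)$ equals $\omega_d\e^d + O(\e^{d+2})$ with the error controlled by the curvature bound $K$) together with the normalization $C_\xi = d\omega_d\int_0^1\xi(r)r^{d-1}dr$, conclude $\|M\|\!\ast\!\xi_\e(x) = C_\xi + O(\e^2)$, and in particular it is bounded below by a positive constant for $\e$ small. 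For the numerator, the key identity is that for a submanifold the first variation integrand $S(\nabla\rho_\e(z-x))$, after Taylor-expanding $\rho_\e$ and using the graph parametrization centered at the nearest point of $M$ to $x$, reproduces $-C_\rho\, H(x,M) + O(\e)$: the zeroth-order (in the displacement) term integrates to the mean curvature via the divergence structure of $\mdiv_S$, the odd terms vanish by the symmetry of the leading-order sphere $M\cap B_\e(x)\approx$ flat disk, and the remaining terms are $O(\e)$ with constants governed by $\|u_p\|_{C^3}\leq K$. This is essentially the content of \cite[Proposition 3.3]{br} and \cite[paragraph 5]{blm1}, which I would cite for the detailed expansion rather than reproduce it.

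Putting the two together: writing $N := \delta\|M\|\!\ast\!\rho_\e(x) = -C_\rho H(x,M) + O(\e)$ and $D := \|M\|\!\ast\!\xi_\e(x) = C_\xi + O(\e^2)$, we have
\begin{equation*}
 H_\e(x,M) - H(x,M) = -\frac{N}{D} - H(x,M) = -\frac{N + D\,H(x,M)}{D} = -\frac{(N + C_\rho H(x,M)) + (D-C_\xi)H(x,M)}{D},
\end{equation*}
using $C_\rho = C_\xi$. The numerator is $O(\e)$ — the first bracket directly, the second because $|D - C_\xi| = O(\e^2)$ and $|H(x,M)|\leq C(K)$ — and the denominator is bounded away from zero, giving $|H_\e(x,M) - H(x,M)|\leq C_1\e$ with $C_1$ depending only on the $C^3$ data of $M$. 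I expect the main obstacle to be the bookkeeping in the numerator expansion: one must carefully center the kernel at the nearest point $\pi_M(x)$ of $M$ to $x$ (not at $x$ itself, which may be off $M$), track how the offset $|x - \pi_M(x)|$ enters — here one should note the estimate only needs to be proven for $x$ within $O(\e)$ of $M$ since both sides vanish far from $M$, actually one wants it for all $x$ and for $x$ far from $\supp\rho_\e(\cdot - z)$ the left side is literally $H(x,M)$ minus $0$, so some care is needed about the regime where $x\notin M$ but $B_\e(x)\cap M\neq\emptyset$ — and verify that the divergence-theorem manipulation producing $H$ from $\mdiv_S(\nabla\rho_\e)$ leaves an error that is genuinely first order in $\e$ and not worse. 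The symmetry cancellation of the would-be $O(1)$ term (which is what makes the leading behavior correct at all) is the conceptual heart, and is exactly where the graph being $C^3$ rather than merely $C^2$ is used to get the clean $O(\e)$ remainder.
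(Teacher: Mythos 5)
The paper does not actually prove Lemma \ref{hhepsilon1}: it imports it wholesale from \cite[Proposition 3.3]{br} (see also \cite[paragraph 5]{blm1}), so there is no internal proof to compare yours against, and your sketch --- which also ultimately defers the detailed expansion to those same references --- is consistent with what they do and is structurally sound (expand numerator and denominator separately, lower-bound the denominator, and use $C_\rho=C_\xi$ in the final algebraic decomposition). Three remarks. First, for a smooth closed manifold you can bypass the ``symmetry cancellation of the would-be $O(1)$ term'' entirely: the divergence theorem on $M$ gives the exact identity $\delta M\ast\rho_\e(x)=-\int_M H(z,M)\,\rho_\e(z-x)\,d\cH^d(z)$, after which the numerator estimate reduces to the Lipschitz continuity of $H$ (this is precisely where $C^3$, or really $C^{2,1}$, enters) together with the same area expansion you use for the denominator; this route needs no delicate cancellation and no natural-kernel-pair relation $-n\xi(r)=r\rho'(r)$, which is worth noting since the paper's own remark after the lemma warns that the cited reference assumes that relation. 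Second, with the normalization $\xi_\e(r)=\e^{-n}\xi(r/\e)$ the convolution of a $d$-dimensional measure scales like $\e^{d-n}$, so your claims $D=C_\xi+O(\e^2)$ and ``$D$ is bounded below by a positive constant'' should read $\e^{n-d}D=C_\xi+O(\e^2)$, and likewise for $N$; the factor cancels in the ratio $N/D$, so nothing breaks, but the bookkeeping as written is off for $d<n$. Third, the difficulty you flag with ``for any $x\in\R^n$'' is genuine but is a defect of the statement itself (in the paper as well): off $M$ both $H(x,M)$ and $H_\e(x,M)$ require a convention, and in the sequel the estimate is only ever used $\|M(t)\|$-almost everywhere, i.e.\ on $M(t)$, so restricting your argument to $x\in M$ (or to a tubular neighborhood with $H(x,M):=H(\pi_M(x),M)$) loses nothing.
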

As stated before, the mean curvature flow is continuous with respect to the $C^3$ distance on the space of submanifolds, using the fact that $[0,T]$ is compact allows to choose $C_1$ uniformly in $t$, i.e.
 \begin{equation}\label{hhepsilon2}
     \big| H(x,M(t)) - H_{\e}(x,M(t)) \big| \leq C_1\e, \quad \forall x\in\R^n, \,\, \forall t\in[0,T].
 \end{equation} 
\begin{remk}
We note that in the original statement of Lemma \ref{hhepsilon1}, the kernels $\rho$ and $\xi$ are supposed to be \textit{natural kernel pair}, i.e. they must satisfy the relation:  $-n\xi(r) = r \rho'(r)$ for all $r\in[0,1]$. Looking carefully at the proof, we realize that this assumption on the pair of kernels is relevant.
\end{remk}
We consequently have the following estimate: 
\begin{prop}\label{HepsilonM}
Let $\e\in(0,1)$, $M$ a $C^3$ closed  $d$-submanifold and $M(t)$ its mean curvature flow, let $\phi \in C^1_c(\R^n,\R^+)$. We have:
\begin{equation}
\begin{split}
   & \Big| ||M(t_2)||(\phi)- ||M(t_1)||(\phi) + \int_{t_1}^{t_2}\int_{\R^n} \phi(x) |H_{\e}(x,M(t))|^2 - \nabla\phi(x)\cdot H_{\e}(x,M(t)) d||M(t)||(x)dt \Big| \\& \leq  ||\phi||_{\infty}C_1\e(||M(t_1)||(\R^n)-||M(t_2)||(\R^n)) + ||\phi||_{C^1}c_3 \e (t_2-t_1),
   \end{split}
\end{equation}
for any $0 \leq t_1 \leq t_2 \leq T$ and $\phi \in C^1_c(\R^n,\R^+)$, where $\displaystyle c_3= C_1 ||M(0)||(\R^n) (2+C_1).$
\end{prop}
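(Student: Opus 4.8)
The plan is to start from the exact integrated weak formulation \eqref{weakmcfequation} of the mean curvature flow, substitute the regularized mean curvature $H_\e(\cdot,M(t))$ for the true one $H(\cdot,M(t))$, and estimate the resulting error using Lemma \ref{hhepsilon1} together with the monotonicity of area along the flow. Since $M(t)$ is a smooth $d$-submanifold, its associated varifold has unit multiplicity, so $||M(t)||(\psi)=\int_{M(t)}\psi\,d\cH^d$ and $\int_{\R^n}\psi\,d||M(t)||=\int_{M(t)}\psi\,d\cH^d$ for any scalar $\psi$; in particular the $H_\e$-term in the statement equals $\int_{t_1}^{t_2}\int_{M(t)}\big(\phi|H_\e|^2-\nabla\phi\cdot H_\e\big)\,d\cH^d\,dt$. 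Inserting \eqref{weakmcfequation} in the form $||M(t_2)||(\phi)-||M(t_1)||(\phi)=\int_{t_1}^{t_2}\int_{M(t)}\big(-\phi|H|^2+\nabla\phi\cdot H\big)\,d\cH^d\,dt$, the quantity to be estimated reduces to
\begin{align*}
&||M(t_2)||(\phi)-||M(t_1)||(\phi)+\int_{t_1}^{t_2}\int_{M(t)}\big(\phi|H_\e|^2-\nabla\phi\cdot H_\e\big)\,d\cH^d\,dt\\
&\qquad=\int_{t_1}^{t_2}\int_{M(t)}\Big[\phi\big(|H_\e|^2-|H|^2\big)-\nabla\phi\cdot(H_\e-H)\Big]\,d\cH^d\,dt,
\end{align*}
where $H=H(y,M(t))$ and $H_\e=H_\e(y,M(t))$, so it suffices to bound the absolute value of this space--time integral.

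For the pointwise estimate I would set $r:=H_\e-H$, so that \eqref{hhepsilon2} gives $|r|\le C_1\e$ uniformly in $y\in\R^n$ and $t\in[0,T]$. Factoring $|H_\e|^2-|H|^2=(|H_\e|-|H|)(|H_\e|+|H|)$ and combining the reverse triangle inequality $\bigl|\,|H_\e|-|H|\,\bigr|\le|r|\le C_1\e$ with $|H_\e|+|H|\le 2|H|+C_1\e$, one sees that $|H_\e|^2-|H|^2$ has absolute value at most $C_1\e(2|H|+C_1\e)$, while $|\nabla\phi\cdot r|\le||\nabla\phi||_\infty C_1\e$. Hence the integrand above is bounded by $||\phi||_\infty C_1\e(2|H|+C_1\e)+||\nabla\phi||_\infty C_1\e$. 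Every term here is already in the desired form except $2||\phi||_\infty C_1\e\,|H|$, which is only linear in $|H|$ and so is not directly controlled by the energy of the flow; for it I would use the elementary inequality $2|H|\le|H|^2+1$ to replace that term by $||\phi||_\infty C_1\e\,(|H|^2+1)$.

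It remains to integrate in space and time and to use two standard facts about the flow: the energy identity $\int_{t_1}^{t_2}\int_{M(t)}|H|^2\,d\cH^d\,dt=||M(t_1)||(\R^n)-||M(t_2)||(\R^n)$, which is \eqref{weakmcfequation} applied with a test function identically $1$ on a neighbourhood of the compact set $\bigcup_{t\in[0,T]}M(t)$, and the resulting monotonicity $||M(t)||(\R^n)\le||M(0)||(\R^n)$ for all $t$, which bounds $\int_{t_1}^{t_2}\cH^d(M(t))\,dt\le(t_2-t_1)||M(0)||(\R^n)$. Substituting these, using $\e\le1$ to absorb $\e^2$ into $\e$, and bounding $||\phi||_\infty$ and $||\nabla\phi||_\infty$ by $||\phi||_{C^1}$, the surviving contributions regroup exactly into $||\phi||_\infty C_1\e\big(||M(t_1)||(\R^n)-||M(t_2)||(\R^n)\big)+||\phi||_{C^1}c_3\e(t_2-t_1)$ with $c_3=C_1||M(0)||(\R^n)(2+C_1)$, which is the claim. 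The only genuinely delicate point is the treatment of the term linear in $|H|$: there is no useful a priori bound on $\int_{t_1}^{t_2}\int_{M(t)}|H|\,d\cH^d\,dt$ taken by itself, so one has to trade one power of $|H|$ for a constant via $2|H|\le|H|^2+1$ and only then invoke the energy identity; everything else is routine bookkeeping.
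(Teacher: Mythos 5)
Your proposal is correct and follows essentially the same route as the paper's own proof: insert the exact weak formulation \eqref{weakmcfequation}, split off the $\nabla\phi\cdot(H_\e-H)$ term, factor $|H_\e|^2-|H|^2$ as a difference of squares controlled by $C_1\e$ via \eqref{hhepsilon2}, convert the leftover linear term in $|H|$ using $2|H|\le|H|^2+1$, and close with the energy identity ($\phi\equiv 1$ in \eqref{weakmcfequation}) together with the monotonicity $||M(t)||(\R^n)\le||M(0)||(\R^n)$. The resulting constant $c_3=C_1||M(0)||(\R^n)(2+C_1)$ matches, and your handling of the factor of $2$ in $|H+H_\e|\le 2|H|+C_1\e$ is in fact slightly more careful than the paper's.
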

\begin{proof}
Let $t_1,t_2\in [0,T]$ such that $0\leq t_1 \leq t_2 \leq T$, let $\phi \in C^1_c(\R^n,\R^+)$.  We have by \eqref{hhepsilon2},
\begin{equation}\label{hhepsilon3}
    \begin{split}
       & \big| \int_{t_1}^{t_2}\int_{\R^n} \nabla\phi(x)\cdot H_{\e}(x,M(t)) d||M(t)||(x)dt - \int_{t_1}^{t_2}\int_{\R^n} \nabla\phi(x)\cdot H_{\e}(x,M(t)) d||M(t)||(x)dt \big|
       \\& \leq ||\nabla\phi||_{\infty} \int_{t_1}^{t_2}\int_{\R^n}  \big| H(x,M(t)) - H_{\e}(x,M(t)) \big| d||M(t)||(x)dt 
       \\& \leq ||\nabla\phi||_{\infty} C_1 \e \int_{t_1}^{t_2}\int_{\R^n} d||M(t)||(x)dt \leq ||\nabla\phi||_{\infty} C_1 ||M(0)||(\R^n) (t_2-t_1) \e,
    \end{split}
\end{equation}
where we used $||M(t)||(\R^n)\leq ||M(0)||(\R^n), \forall t \in[0,T]$. Also, by \eqref{hhepsilon2},
\begin{equation}
\begin{split}
& \Big| \int_{t_1}^{t_2}\int_{\R^n} \phi(x) |H(x,M(t))|^2 d||M(t)||(x)dt - \int_{t_1}^{t_2}\int_{\R^n} \phi(x) |H_{\e}(x,M(t))|^2 d||M(t)||(x)dt \Big|
\\&\leq 
\int_{t_1}^{t_2}\int_{\R^n} \big| \phi(x) |H(x,M(t))|^2 -\phi(x) |H_{\e}(x,M(t))|^2 \big| d||M(t)||(x)dt
\\&\leq
\int_{t_1}^{t_2}\int_{\R^n}  \phi(x)\big| H(x,M(t))-H_{\e}(x,M(t)) \big|\big| H(x,M(t))+H_{\e}(x,M(t)) \big| d||M(t)||(x)dt
\\&\leq ||\phi||_{\infty}
C_1 \e \int_{t_1}^{t_2}\int_{\R^n}  \big| H(x,M(t))+H_{\e}(x,M(t)) \big| d||M(t)||(x)dt 
\\& \leq  ||\phi||_{\infty}
C_1 \e \int_{t_1}^{t_2}\int_{\R^n}  \left( \big| H(x,M(t))\big| + C_1 \e \right)d||M(t)||(x)dt 
\\& \leq ||\phi||_{\infty}
 C_1 \e \int_{t_1}^{t_2}\int_{\R^n} \big| H(x,M(t))\big|  d||M(t)||(x)dt + ||\phi||_{\infty} (C_1\e)^2 \int_{t_1}^{t_2}\int_{\R^n} d||M(t)||(x)dt 
 \\& \leq  ||\phi||_{\infty} 
 C_1 \e \int_{t_1}^{t_2}\int_{\R^n}  \big| H(x,M(t))\big|  d||M(t)||(x)dt + ||\phi||_{\infty}(C_1\e)^2 ||M(0)||(\R^n) (t_2-t_1).
  \end{split}
\end{equation}
For the first term, using \eqref{weakmcfequation} with $\phi$ being the constant function equal $1$ everywhere,
\begin{equation}
\begin{split}
 \int_{t_1}^{t_2}\int_{\R^n}  \big| H(x,M(t))\big|  d||M(t)||(x)dt
 & \leq \int_{t_1}^{t_2} \int_{\R^n}  \left( \big| H(x,M(t))\big|^2 +1 \right) d||M(t)||(x)dt
 \\& \leq (||M(t_1)||(\R^n)-||M(t_2)||(\R^n)) +  (t_2-t_1)||M(0)||(\R^n).
 \end{split}
\end{equation}
It yields,
\begin{equation}\label{hhepsilon4}
    \begin{split}
 & \Big| \int_{t_1}^{t_2}\int_{\R^n} \phi(x) |H(x,M(t))|^2 d||M(t)||(x)dt - \int_{t_1}^{t_2}\int_{\R^n} \phi(x) |H_{\e}(x,M(t))|^2 d||M(t)||(x)dt \Big|
 \\&\leq  ||\phi||_{\infty} C_1 \e 
  \Bigl( (||M(t_1)||(\R^n)-||M(t_2)||(\R^n)) + 
 (t_2-t_1)||M(0)||(\R^n) +
 C_1 ||M(0)||(\R^n) (t_2-t_1) \Bigr).
\end{split}
\end{equation}
Finally, we deduce from \eqref{weakmcfequation}, \eqref{hhepsilon3} and \eqref{hhepsilon4} that:
\begin{equation}\label{approximatebrakke1}
    \begin{split}
 & \Big| ||M(t_2)||(\phi)- ||M(t_1)||(\phi) + \int_{t_1}^{t_2}\int_{\R^n} \phi(x) |H_{\e}(x,M(t))|^2 - \nabla\phi(x)\cdot H_{\e}(x,M(t)) d||M(t)||(x)dt \Big| \\&\leq
||\phi||_{\infty}  C_1 \e (||M(t_1)||(\R^n)-||M(t_2)||(\R^n)) + ||\phi||_{C^1} C_1 \e ||M(0)||(\R^n) (2+C_1)(t_2-t_1)
\\& \leq |\phi||_{\infty}  C_1 \e (||M(t_1)||(\R^n)-||M(t_2)||(\R^n)) + ||\phi||_{C^1}c_3 \e(t_2-t_1) .
\end{split}
\end{equation}
where  $c_3 = C_1 ||M(0)||(\R^n) (2+C_1)$, it completes the proof.
\end{proof}
From \eqref{approximatebrakke1}, and the definition of the bounded Lipschitz distance we infer that 
\begin{equation}\label{approximatebrakke2}
    \begin{split}
 & \Big| ||V_h(t_2)||(\phi)- ||V_h(t_1)||(\phi) + \int_{t_1}^{t_2}\int_{\R^n} \phi(x) |H_{\e}(x,M(t))|^2 - \nabla\phi(x)\cdot H_{\e}(x,M(t)) d||M(t)||(x)dt \Big|
\\& \leq 2\lip(\phi) \max\limits_{t\in\lbrace t_1,t_2\rbrace}\Delta(M(t),V_h(t)) + |\phi||_{\infty}  C_1 \e (||M(t_1)||(\R^n)-||M(t_2)||(\R^n)) + ||\phi||_{C^1}c_3 \e(t_2-t_1) .
\end{split}
\end{equation}
We are now left with proving the following estimate: 
\begin{equation}\begin{split}
 & \Big|\int_{\R^n} \phi(x) |H_{\e}(x,M(t))|^2 - \nabla\phi(x)\cdot H_{\e}(x,M(t)) d||M(t)||(x)  \\& -
 \int_{\R^n} \phi(x) |H_{\e}(x,V_h(t))|^2 - \nabla\phi(x)\cdot H_{\e}(x,V_h(t)) d||V_h(t)||(x) 
 \Big| \leq ||\phi||_{\infty}C(\e+\frac{h}{\e^3})
 \end{split}
\end{equation}
for some constant $C$ to be specified.\\
To do so, let $\e\in(0,1)$, $\phi \in C_c^2(\R^n,\R^+)$. If we define  for every $V\in V_d(\R^n)$ and $x\in\R^n$:
\begin{equation}\label{phiepsilon0}
\varphi_{\e}(x,V):=-\phi(x) |H_{\e}(x,V)|^2+\nabla\phi(x)\cdot H_{\e}(x,V),                                                                                              \end{equation}
we have the following result:
\begin{prop}\label{mtov1}
Let $\Omega$ be a bounded open set of $\R^n$, $M$ a closed $C^2$ $d$-submanifold in $\Omega$ and $V_h$ a volumetric discretization of $M$ of parameter $h$ defined as in \ref{dfnvolumetric}.\\
If $h$ satisfies: $2h\leq \gamma \e$, for a constant $\gamma$ satisfying:
\begin{equation}
  \gamma \leq \max\limits_{x\in M} \lambda(x)^{-1}
  \quad \text{and} \,\, \gamma \leq (8(1+C_0^{d/2}))^{-1}
\end{equation}
where $\lambda(x)$ is the maximal principal curvature at $x$ and $C_0$ is the Ahlfors regularity constant of $M$. One has,
\begin{equation}
\Big| \int_{\R^n} \varphi_{\e}(\cdot,M) d||M|| -  \int_{\R^n} \varphi_{\e}(\cdot,M) d||V_h||\Big| \leq c_4 ||\phi||_{C^2} \frac{h}{\e^3},
\end{equation}
where $c_4=(2\gamma^{-1}(c_5^2+c_5)+2c_5c_6 + c_6) ||M(0)||(\R^n)$ and $c_5,c_6$ are constants  depending only on $\rho$, $\xi$, $C_0$ and $d$ and are defined in Lemma \ref{hebounds}.
\end{prop}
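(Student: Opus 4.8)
The plan rests on a simple but important observation: in the quantity
\[
\Big| \int_{\R^n} \varphi_\e(\cdot,M)\,d\|M\| - \int_{\R^n}\varphi_\e(\cdot,M)\,d\|V_h\| \Big|
\]
the integrand $\varphi_\e(\cdot,M)$ is \emph{one and the same function}, built from $M$ alone and not from the mesh; only the measures against which it is tested change. So this is an instance of the mass approximation quantified in Proposition \ref{volumapp}: if $\varphi_\e(\cdot,M)$ is Lipschitz on $\Omega$ with $\lip_\Omega(\varphi_\e(\cdot,M)) \le \Lambda_\e$, then the quantity above is $\le h\,\Lambda_\e\,\|M\|(\supp\varphi_\e(\cdot,M)) \le h\,\Lambda_\e\,\|M\|(\R^n)$. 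Hence everything reduces to proving the Lipschitz estimate $\Lambda_\e \le C\,\|\phi\|_{C^2}\,\e^{-3}$ with $C$ of the announced shape.

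For the Lipschitz estimate I would first record, from Lemma \ref{hebounds}, the two pointwise bounds that carry all the geometric information: a sup bound on $|H_\e(\cdot,M)|$ governed by $c_5$ and a Lipschitz-in-$x$ bound on $H_\e(\cdot,M)$ governed by $c_6$ (each up to the powers of $\e^{-1}$ forced by the scalings $\rho_\e = \e^{-n}\rho(\cdot/\e)$, $\xi_\e = \e^{-n}\xi(\cdot/\e)$). These come from the elementary upper estimates $|\delta M\ast\rho_\e|,\ |\nabla_x(\delta M\ast\rho_\e)|,\ |\nabla_x(\|M\|\ast\xi_\e)| \lesssim \e^{-n-2}\|M\|(B_\e(\cdot))$ together with the Ahlfors upper bound $\|M\|(B_\e(\cdot)) \le C_0\e^d$, divided by the crucial \emph{lower} bound $\|M\|\ast\xi_\e(x) \gtrsim \beta\,C_0^{-1}\,\e^{d-n}$, valid for $x$ in a neighborhood of $\supp M$ large enough to contain $\supp\|V_h\| \subset (\supp M)^h$ (recall $2h\le\gamma\e$). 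This lower bound is where the smallness hypotheses bite: the condition $\gamma\le(8(1+C_0^{2/d}))^{-1}$ together with $2h\le\gamma\e$ ensures, via Ahlfors regularity, that for every such $x$ the spherical shell $\{\tfrac14 C_0^{-2/d}\e \le |z-x|\le\tfrac12\e\}$ — on which $\xi_\e \ge \beta\,\e^{-n}$, where $\beta = \min\{\xi(s):\tfrac14 C_0^{-2/d}\le s\le\tfrac12\}>0$ — meets $\supp M$ in a set of $\cH^d$-measure $\gtrsim\e^d$ (compare the $\|M\|$-mass of $B(x,\tfrac12\e)$ with that of $B(x,\tfrac14 C_0^{-2/d}\e)$), while the curvature bound $\gamma\le\max_{x\in M}\lambda(x)^{-1}$ keeps $M$ graphical enough at the scales involved for the $C^2$-graph estimates behind Lemma \ref{hebounds}.

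With $\|H_\e(\cdot,M)\|_\infty$ and $\lip_\Omega(H_\e(\cdot,M))$ in hand, I would differentiate $\varphi_\e(x,M) = -\phi(x)|H_\e(x,M)|^2 + \nabla\phi(x)\cdot H_\e(x,M)$ by the product rule to get
\[
\lip_\Omega(\varphi_\e(\cdot,M)) \le \|\nabla\phi\|_\infty\|H_\e\|_\infty^2 + 2\|\phi\|_\infty\|H_\e\|_\infty\lip(H_\e) + \|D^2\phi\|_\infty\|H_\e\|_\infty + \|\nabla\phi\|_\infty\lip(H_\e),
\]
bound each factor by $\|\phi\|_{C^2}$ times the appropriate negative power of $\e$ — the worst term being $2\|\phi\|_\infty\|H_\e\|_\infty\lip(H_\e)$, which is of order $\e^{-3}$ — and then, using $\e\in(0,1)$ and $2h\le\gamma\e$ to absorb all contributions of lower order in $\e$ into the $\e^{-3}$ term, collect the constants into $\Lambda_\e \le (2\gamma^{-1}(c_5^2+c_5)+2c_5c_6+c_6)\,\|\phi\|_{C^2}\,\e^{-3}$. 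Feeding this back into the estimate of Proposition \ref{volumapp} and using $\|M\|(\R^n)\le\|M(0)\|(\R^n)$ gives the stated inequality.

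The step I expect to be the real obstacle is establishing the lower bound on $\|M\|\ast\xi_\e$ uniformly on the whole $h$-tube around $M$, and — relatedly — verifying that $\varphi_\e(\cdot,M)$ is genuinely Lipschitz on \emph{all} of $\Omega$, as Proposition \ref{volumapp} demands; this forces one to control the quotient $H_\e(\cdot,M)$ near the outer boundary of the $\e$-tube $(\supp M)^\e$, where the denominator $\|M\|\ast\xi_\e$ degenerates to zero, and it is precisely this delicate analysis that Lemma \ref{hebounds} is designed to isolate. The remaining steps — differentiating convolutions and tracking constants — are routine.
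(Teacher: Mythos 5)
Your overall strategy --- viewing the two integrals as the same function $\varphi_\e(\cdot,M)$ tested against $\|M\|$ and against $\|V_h\|$, invoking Proposition \ref{volumapp}, and reducing everything to a Lipschitz bound of order $\|\phi\|_{C^2}\e^{-3}$ --- is exactly the paper's, and your accounting of the individual ingredients ($|H_\e|\lesssim c_5\e^{-1}$, $\lip(H_\e)\lesssim c_6\e^{-2}$, product rule on $\varphi_\e$) matches Lemma \ref{hebounds} and the subsequent computation. But there is a genuine gap at precisely the step you flag as ``the real obstacle'': to apply Proposition \ref{volumapp} you need $\varphi_\e(\cdot,M)$ to be Lipschitz on all of $\Omega$, and you express the hope that Lemma \ref{hebounds} supplies this. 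It does not, and cannot: the lemma only bounds $|H_\e|$ and $\lip(H_\e)$ on the tube $(\supp M)^{\gamma\e}$, because the lower bound $\|M\|\ast\xi_\e\gtrsim \beta C_0^{-1}\e^{d-n}$ is only available there. Near the outer boundary of $(\supp M)^{\e}$ the denominator $\|M\|\ast\xi_\e$ may tend to zero faster than $\delta M\ast\rho_\e$, so $H_\e$, hence $\varphi_\e(\cdot,M)$, has no controlled global Lipschitz constant, and the direct application of Proposition \ref{volumapp} breaks down.

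The missing idea is localization by a cutoff. Since $\supp\|M\|$ and $\supp\|V_h\|$ are both contained in $(\supp M)^{h}$ and $2h\le\gamma\e$, one may insert a function $\psi$ with $\psi\equiv 1$ on $(\supp M)^{h}$, $\psi\equiv 0$ outside $(\supp M)^{\gamma\e}$, and $\lip(\psi)\le(\gamma\e-h)^{-1}\le 2\gamma^{-1}\e^{-1}$, without changing either integral, and then apply Proposition \ref{volumapp} to $\psi\,\varphi_\e(\cdot,M)$. Its Lipschitz constant is controlled by $\lip_{M^{\gamma\e}}\varphi_\e+\lip(\psi)\max_{M^{\gamma\e}}|\varphi_\e|$, which requires only the bounds of Lemma \ref{hebounds} on $M^{\gamma\e}$. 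This is also where the term $2\gamma^{-1}(c_5^2+c_5)$ in $c_4$ comes from --- it is exactly $\lip(\psi)\cdot\max_{M^{\gamma\e}}|\varphi_\e|$ --- whereas in your write-up this term appears in the final constant without derivation: your product-rule expansion alone, after absorbing the lower-order powers of $\e$, would only yield $c_5^2+2c_5c_6+c_5+c_6$. So the fix is not a sharper global analysis of $H_\e$ but a cutoff at scale $\gamma\e$, whose Lipschitz cost is precisely the unexplained $\gamma^{-1}$ factor in your constant.
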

\begin{proof}
By Proposition \ref{volumapp}, we have: 
\begin{equation}
    \Big| \int_{\R^n} \varphi_{\e}(\cdot,M) d||M|| -  \int_{\R^n} \varphi_{\e}(\cdot,M)d||V_h||\Big| \leq  \underset{\R^n}{\lip}(\varphi_{\e}(M)) \Delta(||M||,||V_h||)  \leq h \underset{\R^n}{\lip}(\varphi_{\e}(M) )||M||(\R^n).
\end{equation}
Note that on the boundary of $\supp M^{\e}$, the quantity $||M||\ast\xi_{\e}$ might tend to $0$ faster that  $\delta M \ast \rho_{\e}$, this would make the norm of $H_{\e}$ explodes, so does $\lip(\varphi_{\e})$. As both $\supp M$ and $\supp V_h$ are included in $M^h$, we will introduce a cut function to measure the difference only on a small neighborhood containing $M^h$. We define a cut function  $\psi$ as follows:\\
We choose $h, \gamma$ satisfying the assumptions of the Proposition, we set:
$$
\psi(x) = \left\{
    \begin{array}{ll}
        1 & \mbox{if} \, x\in  (\supp M)^{h} \\
        0 & \mbox{if} \, x\in  \R^n\setminus \supp M^{\gamma\e}
        \end{array}
\right. \quad \text{and} \,\, \lip(\psi) \leq \frac{1}{\gamma\e-h}
$$
due to the choice of $\gamma$, $\psi$ is well defined, plugging the cutoff function into the expression: 
\begin{equation}\label{approximatebrakke4}
    \begin{split}
        &\Big| \int_{\R^n} \varphi_{\e}(\cdot,M) d||M|| -  \int_{\R^n} \varphi_{\e}(\cdot,M) d||V_h||\Big|
        \\&  = \Big| \int_{\R^n} \psi(\cdot)\varphi_{\e}(\cdot,M) d||M|| -  \int_{\R^n} \psi(\cdot)\varphi_{\e}(\cdot,M) d||V_h||\Big| 
        \\& \leq h \lip(\psi\varphi_{\e}(M)) ||M||(\R^n).
    \end{split}
\end{equation}
We have: $\lip(\psi) \leq \frac{1}{\gamma\e-h} \leq \frac{2}{\gamma}\e^{-1}$, this yields(omitting the dependence of $\varphi_{\e}$ and $H_{\e}$ on $M$ for a while) 
\begin{equation}\label{phiepsilon1}
\begin{split}
\lip(\psi\varphi_{\e}) \leq \max \psi \lip_{M^{\gamma\e}}\varphi_{\e} + \lip\psi \max_{M^{\gamma\e}} \varphi_{\e} \leq \lip_{M^{\gamma\e}} \varphi_{\e} + \frac{2}{\gamma}\e^{-1} \max_{M^{\gamma\e}} \varphi_{\e}  
\end{split}
\end{equation}
and,
\begin{equation}
\max_{M^{\gamma\e}} \varphi_{\e} \leq \max \phi (\max_{M^{\gamma\e}} |H_{\e}|^2) + \max \nabla \phi \max_{M^{\gamma\e}} |H_{\e}| 
\end{equation}
finally,
\begin{equation}
    \lip_{M^{\gamma\e}} \varphi_{\e} \leq \lip \phi (\max_{M^{\gamma\e}} H_{\e})^2 + \max \phi (\lip_{M^{\gamma\e}} |H_{\e}|^2) + \lip(\nabla\phi)\max_{M^{\gamma\e}} H_{\e} + \max(\nabla \phi) (\lip_{M^{\gamma\e}} H_{\e})
\end{equation}
by Lemma \ref{hebounds} that we will state right after, and for $V=M$, we can assert that there exist two constants $c_5$ and $c_6$ depending only on $\rho$, $\xi$, $C_0$ and $d$ such that:
\begin{equation}
 \max_{M^{\gamma\e}}|H_{\e}|\leq c_5 \e^{-1}, \quad \lip_{M^{\gamma\e}}(H_{\e})\leq c_6 \e^{-2}.
\end{equation}
Consequently,
\begin{equation}\label{phiepsilon2}
 \max\limits_{M^{\gamma\e}} \varphi_{\e} \leq ||\phi||_{\infty} c_5^2 \e^{-2} + ||\nabla \phi||_{\infty}c_5\e^{-1},
\end{equation}
using $\lip\limits_{M^{\gamma\e}}(|H_{\e}|^2)\leq 2 \max\limits_{M^{\gamma\e}}|H_{\e}| \lip\limits_{M^{\gamma\e}}(H_{\e})$ we obtain
\begin{equation}\label{phiepsilon3}
\begin{split}
 \lip_{M^{\gamma\e}} \varphi_{\e} &\leq ||\nabla\phi||_{\infty} c_5^2 \e^{-2}  + ||\phi||_{\infty}2c_5 \e^{-1} c_6 \e^{-2} + ||\nabla^2\phi||_{\infty} c_5\e^{-1} + ||\nabla\phi||_{\infty}c_6\e^{-2}.
 \end{split}
\end{equation}
From \eqref{phiepsilon1}, \eqref{phiepsilon2} and \eqref{phiepsilon3}, using $\gamma \leq 1$, we get
\begin{equation}
\begin{split}
 \lip(\psi\varphi_{\e}(M)) & \leq 2\gamma^{-1}\left( ||\phi||_{\infty} c_5^2 \e^{-2} + ||\nabla \phi||_{\infty}c_5\e^{-1} \right)\\&  + 
 ||\nabla\phi||_{\infty} c_5^2 \e^{-2}  + ||\phi||_{\infty}2c_5 \e^{-1} c_6 \e^{-2} + ||\nabla^2\phi||_{\infty} c_5\e^{-1} + ||\nabla\phi||_{\infty}c_6\e^{-2}
 \\&
 \leq ||\phi||_{C^2}\e^{-3}\left( 2 \gamma^{-1}\left(c_5^2 +c_5\right)+2c_5c_6+c_6\right).
\end{split}
\end{equation}
\eqref{approximatebrakke4} gives,
\begin{equation}
\begin{split}
        \Big| \int_{\R^n} \varphi_{\e}(\cdot,M) d||M|| -  \int_{\R^n} \varphi_{\e}(\cdot,M) d||V_h||\Big| \leq c_4  ||\phi||_{C^2} \frac{h}{\e^3}
\end{split}
\end{equation}
where $c_4= (2\gamma^{-1}(c_5^2+c_5)+2c_5c_6 + c_6) ||M||(\R^n)$.
\end{proof}
The proof of the following lemma is inspired by \cite[prop 4.6]{br}. 
\begin{lemma}\label{hebounds}
Let $V$ be a $d$-Ahlfors regular varifold in $\R^n$ with Ahlfors constant $C_0$. Let $\e,\gamma \in (0,1)$ with $\displaystyle \gamma\leq (8(1+C_0^{2/d}))^{-1}$, we have: 
\begin{equation}
 \max\limits_{V^{\gamma\e}} |H_{\e}(\cdot,V)| \leq c_5 \e^{-1},  \quad \lip\limits_{V^{\gamma\e}}(H_{\e}(\cdot,V))\leq c_6 \e^{-2},
\end{equation}
where $c_5,c_6$ are two constants depending on  $\rho,\xi, C_0$ and $d$.
\end{lemma}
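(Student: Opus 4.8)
The plan is to estimate, from below and above, the two building blocks $\|V\|\ast\xi_\e(y)$ and $\delta V\ast\rho_\e(y)$ on the neighborhood $V^{\gamma\e}$, and then to differentiate the quotient $H_\e = -\,\delta V\ast\rho_\e / \|V\|\ast\xi_\e$ via the quotient rule. The crucial point is a \emph{lower} bound of the form $\|V\|\ast\xi_\e(y)\geq c\,\e^{-n+d}$ for $y\in V^{\gamma\e}$: since $y$ is within $\gamma\e$ of $\supp\|V\|$, there is a point $x_0\in\supp\|V\|$ with $|x_0-y|\leq\gamma\e$, and the ball $B(x_0,\tfrac{\e}{2})$ (say) sees a chunk of mass of size $\gtrsim C_0^{-1}\e^d$ by Ahlfors regularity; because $\xi$ is positive on $]0,1[$ and $\gamma$ is chosen small (this is exactly where $\gamma\leq(8(1+C_0^{2/d}))^{-1}$ and the constant $\beta=\min\{\xi(s): s\in[\tfrac{C_0^{-2/d}}{4},\tfrac12]\}>0$ from the theorem hypotheses enter), the kernel $\xi_\e(z-y)=\e^{-n}\xi(|z-y|/\e)$ is bounded below by $\beta\e^{-n}$ on an annular region carrying mass $\gtrsim \e^d$. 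This gives $\|V\|\ast\xi_\e(y)\gtrsim \beta C_0^{-1}\e^{d-n}$. The matching \emph{upper} bound $\|V\|\ast\xi_\e(y)\lesssim \|\xi\|_\infty C_0\,\e^{d-n}$ follows by covering $B(y,\e)$ with $O(1)$ balls of radius $\e$ and applying the upper Ahlfors bound, and similarly $|\delta V\ast\rho_\e(y)|\leq \int |S(\nabla\rho_\e(z-y))|\,dV(z,S)\leq \|\nabla\rho_\e\|_\infty\,\|V\|(B(y,\e))\lesssim \|\rho\|_{C^1}\,\e^{-n-1}\cdot C_0\e^d = c\,\e^{d-n-1}$.

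From these three estimates the first claim is immediate: $|H_\e(y,V)| = |\delta V\ast\rho_\e(y)| / (\|V\|\ast\xi_\e(y)) \lesssim (\e^{d-n-1})/(\e^{d-n}) = c_5\,\e^{-1}$, with $c_5$ depending only on $\rho,\xi,C_0,d$. For the Lipschitz bound I would write, for $y_1,y_2\in V^{\gamma\e}$,
\[
H_\e(y_1,V)-H_\e(y_2,V) = -\frac{\delta V\ast\rho_\e(y_1)-\delta V\ast\rho_\e(y_2)}{\|V\|\ast\xi_\e(y_1)} - \delta V\ast\rho_\e(y_2)\left(\frac{1}{\|V\|\ast\xi_\e(y_1)}-\frac{1}{\|V\|\ast\xi_\e(y_2)}\right),
\]
and control each piece. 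The numerator difference $|\delta V\ast\rho_\e(y_1)-\delta V\ast\rho_\e(y_2)|\leq \int|S(\nabla\rho_\e(z-y_1)-\nabla\rho_\e(z-y_2))|\,dV \leq \|D^2\rho_\e\|_\infty |y_1-y_2|\,\|V\|(B(y_1,\e)\cup B(y_2,\e)) \lesssim \|\rho\|_{C^2}\,\e^{-n-2}\cdot C_0\e^d\,|y_1-y_2| = c\,\e^{d-n-2}|y_1-y_2|$; dividing by $\|V\|\ast\xi_\e(y_1)\gtrsim\e^{d-n}$ gives $\lesssim \e^{-2}|y_1-y_2|$. For the second piece, $|\delta V\ast\rho_\e(y_2)|\lesssim\e^{d-n-1}$, and $|\,\|V\|\ast\xi_\e(y_1)-\|V\|\ast\xi_\e(y_2)\,| \leq \|D\xi_\e\|_\infty |y_1-y_2|\,\|V\|(B(y_1,\e)\cup B(y_2,\e))\lesssim \e^{-n-1}\cdot C_0\e^d|y_1-y_2| = c\,\e^{d-n-1}|y_1-y_2|$, while the product $\|V\|\ast\xi_\e(y_1)\,\|V\|\ast\xi_\e(y_2)\gtrsim\e^{2(d-n)}$; multiplying out, $\e^{d-n-1}\cdot\e^{d-n-1}/\e^{2(d-n)} = \e^{-2}$, so this piece is also $\lesssim \e^{-2}|y_1-y_2|$. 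Adding the two contributions yields $\lip_{V^{\gamma\e}}(H_\e(\cdot,V))\leq c_6\,\e^{-2}$.

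Two technical points need care. First, in the difference estimates above I implicitly need a uniform upper mass bound $\|V\|(B(y,\e))\lesssim C_0\e^d$ valid for all $y\in V^{\gamma\e}$, not just $y\in\supp\|V\|$; this is fine since $B(y,\e)\subset B(x_0,2\e)$ for a nearby $x_0\in\supp\|V\|$ and the Ahlfors upper bound holds at scale $2\e$ (up to adjusting $C_0$, using the remark after Definition \ref{dfnahlfors}). Second, and this is the main obstacle, is making the lower bound $\|V\|\ast\xi_\e(y)\gtrsim\beta C_0^{-1}\e^{d-n}$ completely rigorous: one must verify that, after translating so that $y$ is near a mass point $x_0$, there is a definite annulus $\{s_1\e\leq|z-y|\leq s_2\e\}$ with $s_1,s_2$ depending only on $C_0,d$ (of the form $\tfrac{C_0^{-2/d}}{4}$ and $\tfrac12$, matching the definition of $\beta$) on which $\xi(|z-y|/\e)\geq\beta$ and which carries $\|V\|$-mass at least $c(C_0,d)\,\e^d$ — the latter by subtracting the mass of a small inner ball (controlled above by Ahlfors) from the mass of a larger ball (controlled below by Ahlfors), which is precisely why the constraint $\gamma\leq(8(1+C_0^{2/d}))^{-1}$ is shaped the way it is. Once the annulus and its mass lower bound are pinned down, everything else is bookkeeping with the constants $c_5,c_6$.
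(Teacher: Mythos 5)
Your proposal is correct and follows essentially the same route as the paper: a lower bound on $\|V\|\ast\xi_\e$ via the mass of the annulus $\bigl[\tfrac{C_0^{-2/d}}{4}\e,\tfrac{\e}{2}\bigr]$ where $\xi\geq\beta$ (obtained by subtracting Ahlfors ball masses after translating to a nearby point of $\supp\|V\|$, which is exactly where the constraint on $\gamma$ enters), an upper bound on $|\delta V\ast\rho_\e|$ of order $\e^{d-n-1}$, and the quotient-rule decomposition with kernel-Lipschitz estimates for the $\e^{-2}$ Lipschitz bound. The step you flag as the remaining obstacle is carried out in the paper precisely as you sketch it, so there is no gap.
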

\begin{proof} 
We start with $\max\limits_{V^{\gamma\e}} |H_{\e}|$, we first prove that there exists a constant $c_7$ such that:
$$\e^{n} \V \ast \xi_{\e}(z) \geq c_7 \e^{d} \quad \forall z \in V^{\gamma\e}.$$
Denote $$\beta = \min \big\lbrace \xi(s) \big| s\in \big[ \frac{C_0^{-2/d}}{4}, \frac{1}{2} \big]\big\rbrace > 0.$$
Let $z\in V^{\gamma\e}$, let $x\in \supp\V$ such that $|x-z|\leq \gamma\e$, we write
\begin{equation*}
\begin{split}
\e^{n} \V \ast \xi_{\e}(z) &\geq \beta \left( \V(B(z,\frac{\e}{2}))- \V(B(z,C_0^{-2/d}\frac{\e}{4})) \right)
\\&
\geq \beta \left( \V\left(B(x,\frac{\e}{2}-|x-z|)\right)- \V\left(B(x,C_0^{-2/d}\frac{\e}{4}+|x-z|)\right) \right)
\\&
\text{using the Ahlfors property }
\\&
\geq \beta \left( C_0^{-1} \left( \frac{\e}{2}-|x-z|\right)^d 
- C_0 \left( C_0^{-2/d}\frac{\e}{4} +|x-z|\right)^d
\right) \\& 
\geq \beta C_0^{-1}2^{-d} \left( (\e-2|x-z|)^d - (\frac{\e}{2} +2C_0^{2/d}|x-z|)^d\right) \\&
\geq \beta C_0^{-1}2^{-d} \left( (\e-2\gamma\e)^d - (\frac{\e}{2} +2C_0^{2/d}\gamma\e)^d\right) \\&
\geq \beta C_0^{-1}2^{-d}\e^{d} \left( (1-2\gamma)^d - (\frac{1}{2} +2C_0^{2/d}\gamma)^d\right) \\&
\text{ using "$a^d-b^d\geq (a-b)a^{d-1}$ when $a\geq b \geq 0$" we get: }
\\& \geq \beta C_0^{-1} 2^{-d}\e^{d} \left((\frac{1}{2}-2\gamma(1+C_0^{2/d}))(1-2\gamma)^{d-1}\right)
\\&  \geq \beta C_0^{-1} 2^{-d}\e^{d} \left(\frac{1}{2} \right)^{d+1}
\geq \beta C_0^{-1} 2^{-2d-1}\e^{d}, 
\end{split}
\end{equation*}
thus,
\begin{equation}\label{c_7lower}
\e^{n} \V \ast \xi_{\e}(z) \geq  c_7 \e^{d},
\end{equation}
with $c_7=\beta C_0^{-1} 2^{-2d-1}$.\\
Let $x \in \supp \V$ such that $|x-z|\leq \gamma\e \leq \e$, we have 
\begin{equation*}
\begin{split}
    \big|\e^{n} (\delta V \ast \rho_{\e})(z) \big| &=  \big| \int S(\nabla \rho_{\e}(z-y)) dV(y,S)  \big|
    \leq \e^{-1}||\rho'||_{\infty} \V\left(B(z,\e)\right) \\&
    \leq \e^{-1}||\rho'||_{\infty} \V\left(B(x,2\e)\right)
 \leq \e^{-1}||\rho'||_{\infty} C_0 (2\e)^{d} = 2^d ||\rho'||_{\infty} C_0 \e^{d-1},
\end{split}
\end{equation*}
thus, for any $z\in \R^n$,
\begin{equation*}
 \big|\e^{n} (\delta V \ast \rho_{\e})(z) \big| \leq 2^d ||\rho'||_{\infty} C_0 \e^{d-1}.
\end{equation*}
Consequently,
\begin{equation}
 \max\limits_{z\in V^{\gamma\e}} |H_{\e}(z,V)|=\max\limits_{z\in V^{\gamma\e}} \frac{|(\delta V \ast \rho_{\e})(z)|}{|(\V \ast \xi_{\e})(z)|}  \leq \frac{2^d ||\rho'||_{\infty} C_0 \e^{d-1}}{\beta C_0^{-1} 2^{-2d-1} \e^{d}} \leq \beta^{-1}C_0^2 2^{3d+1}||\rho'||_{\infty} \e^{-1},
\end{equation}
for $c_5=\beta^{-1}C_0^2 2^{3d+1}||\rho'||_{\infty}$, we conclude the proof of the first part of our Lemma.\\\\
We now deal with $\lip\limits_{V^{\gamma\e}}(H_{\e})$, for any $x,z \in  V^{\gamma\e}$ we have
\begin{equation*}
\begin{split}
    \e^{n} \big| \V\ast\xi_{\e}(z) - \V\ast\xi_{\e}(x)  \big| 
    & = 
    \Big| \int \xi\left(\frac{z-y}{\e}\right) d\V(y) -\int \xi \left(\frac{x-y}{\e}\right) d\V(y) \Big| 
    \\&\leq
    |x-z|\e^{-1} ||\xi'||_{\infty}  \V\left( B(x,\e) \cup B(z,\e) \right) 
    \\&\leq 
    |x-z| \e^{-1} ||\xi'||_{\infty} \V\left( B(x',2\e) \cup B(z',2\e) \right) 
    \text{for some $x',z' \in supp \V$}
    \\&\leq 
    |x-z| \e^{-1} ||\xi'||_{\infty} \left(2 C_0 (2\e)^{d} \right) \leq 2^{d+1} C_0 ||\xi'||_{\infty} |x-z| \e^{d-1}.
    \end{split}
\end{equation*}
Same computations give 
\begin{equation}
     \e^{n} \big| \delta V \ast\rho_{\e}(z) - \delta V \ast\rho_{\e}(x)  \big| \leq 2^{d+1} C_0 ||\rho''||_{\infty} |x-z| \e^{d-2},
\end{equation}
therefore, 
\begin{equation*}
\begin{split}
    \Big| H_{\e}(z,V) - H_{\e}(x,V) \Big| & 
    = \Big| \frac{\delta V \ast\rho_{\e}(z)}{\V\ast\xi_{\e}(z)} - \frac{\delta V \ast\rho_{\e}(x)}{\V\ast\xi_{\e}(x)} \Big|
    \\& \leq
    \frac{\big|\delta V \ast\rho_{\e}(z)-\delta V \ast\rho_{\e}(x)  \big|}{\V \ast\xi_{\e}(z)} + |\delta V \ast\rho_{\e}(x)| \Big| \frac{1}{\V\ast\xi_{\e}(z)} - \frac{1}{\V\ast\xi_{\e}(x)}\Big| \\&\leq
    \frac{2^{d+1} C_0 ||\rho''||_{\infty} |x-z| \e^{d-2} }{\e^n\V \ast\xi_{\e}(z)} +  2^d ||\rho'||_{\infty} C_0  \e^{d-1} \frac{2^{d+1} C_0 ||\xi'||_{\infty}  |x-z|\e^{d-1}}{\e^{2n}\V \ast\xi_{\e}(z) \V \ast\xi_{\e}(x)} 
    \\&\leq 
    \frac{2^{d+1} C_0 ||\rho''||_{\infty} |x-z| \e^{d-2} }{\beta C_0^{-1} 2^{-2d-1}\e^d} +  2^d ||\rho'||_{\infty} C_0  \e^{d-1} \frac{2^{d+1} C_0 ||\xi'||_{\infty}  |x-z|\e^{d-1}}{\beta C_0^{-1} 2^{-2d-1} \e^d \beta C_0^{-1} 2^{-2d-1} \e^d} 
    \\& \leq \beta^{-1} C_0^2 2^{3d+1}||\rho'||_{\infty} |x-z| \e^{-2}  + 2^{6d+3} C_0^4 \beta^{-2} ||\rho'||_{\infty}||\xi'||_{\infty} |x-z|\e^{-2}. 
    \end{split}
\end{equation*}
In conclusion,
\begin{equation}
 \lip\limits_{V^{\gamma\e}} (H_{\e}(\cdot,V))\leq c_6  \e^{-2},
\end{equation}
for $c_6= \beta^{-1} C_0^2 2^{3d+1}||\rho'||_{\infty} \left(1+ \beta^{-1} C_0^2 2^{3d+2} ||\xi'||_{\infty} \right) $, and this finishes the proof.
\end{proof}
\begin{prop}\label{mtov2}
Under the same assumptions of Proposition \ref{mtov1}, if we assume in addition: 
\begin{equation}
   \displaystyle \beta > \gamma 2^{3d}C_0^2(\lip(\xi)+1).
\end{equation}
Then,
\begin{equation}
 \Big| \int_{\R^n} \varphi_{\e}(\cdot,M) d||V_h|| -  \int_{\R^n} \varphi_{\e}(\cdot,V_h) d||V_h||\Big| \leq c_8||\phi||_{C^1} \frac{h}{\e^3},
\end{equation}
where $\varphi_{\e}$ was previously defined in \eqref{phiepsilon0} and $c_8$ is defined in \eqref{c_8lower}.
\end{prop}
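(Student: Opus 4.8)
The plan is to reduce the statement to a pointwise comparison of $H_{\e}(\cdot,M)$ and $H_{\e}(\cdot,V_h)$ on $\supp||V_h||$. Since $m_K=\cH^d(M\cap K)>0$ forces $K\subset(\supp M)^{h}$ and $2h\leq\gamma\e$, one has $\supp||V_h||\subset(\supp M)^{h}\subset(\supp M)^{\gamma\e}$, so Lemma \ref{hebounds} and the lower bound \eqref{c_7lower} are available for $M$ on this set. Writing, for $x\in\supp||V_h||$,
\[
\varphi_{\e}(x,M)-\varphi_{\e}(x,V_h)=-\phi(x)\bigl(H_{\e}(x,M)-H_{\e}(x,V_h)\bigr)\cdot\bigl(H_{\e}(x,M)+H_{\e}(x,V_h)\bigr)+\nabla\phi(x)\cdot\bigl(H_{\e}(x,M)-H_{\e}(x,V_h)\bigr),
\]
one sees that it suffices to establish, on $\supp||V_h||$, the sup-bounds $|H_{\e}(\cdot,M)|,|H_{\e}(\cdot,V_h)|\leq c\,\e^{-1}$ together with the difference bound $|H_{\e}(\cdot,M)-H_{\e}(\cdot,V_h)|\leq c\,h\,\e^{-2}$. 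Indeed these give $|\varphi_{\e}(\cdot,M)-\varphi_{\e}(\cdot,V_h)|\leq c\,||\phi||_{C^1}\,h\,\e^{-3}$ (using $\e\leq1$), and integrating against $||V_h||$ with $||V_h||(\R^n)=||M||(\R^n)$ — which follows from Proposition \ref{volumapp} applied to a Lipschitz function equal to $1$ on $\Omega$, hence of zero Lipschitz constant there — yields the claim with $c_8$ a suitable multiple of $||M||(\R^n)$.

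The two core estimates are the discretization errors of the regularized quantities. For the mass, Proposition \ref{volumapp} applied to $y\mapsto\xi_{\e}(x-y)$ gives, for $x\in\R^n$,
\[
\big| \,||M||\ast\xi_{\e}(x)-||V_h||\ast\xi_{\e}(x)\, \big|\leq h\,\lip(\xi_{\e})\,||M||(B(x,\e))\leq h\,\e^{-n-1}\lip(\xi)\,C_0\,\e^{d},
\]
and a Lipschitz-bump argument likewise gives $||V_h||(B(x,\e))\leq 2^{d+1}C_0\,\e^{d}$ (so that a possible overlap of mesh cells is harmless), whence $|\delta V_h\ast\rho_{\e}(x)|\leq 2^{d+1}C_0\,||\rho'||_{\infty}\,\e^{d-n-1}$ exactly as in the proof of Lemma \ref{hebounds}. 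For the first variation I would apply the $\R^n\times\G$ version of Proposition \ref{volumapp} to the kernel $(y,S)\mapsto S(\nabla\rho_{\e}(x-y))$, whose Lipschitz constant on $\Omega\times\G$ is controlled by a dimensional constant times $\e^{-n-2}||\rho||_{C^2}$ (its $S$-dependence only producing a lower-order $\e^{-n-1}||\rho'||_{\infty}$ term), obtaining
\[
\big|\delta M\ast\rho_{\e}(x)-\delta V_h\ast\rho_{\e}(x)\big|\leq C(n)\,h\,\e^{-n-2}\,||\rho||_{C^2}\,(1+2C)\,C_0\,\e^{d},
\]
with $C$ the Lipschitz bound of $y\mapsto T_yM$ on $\supp M$ from Proposition \ref{volumapp}.

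The step I expect to be the main obstacle, and the one for which the extra hypothesis $\beta>\gamma 2^{3d}C_0^2(\lip(\xi)+1)$ is needed, is transferring the lower bound from $M$ to $V_h$: for $x\in\supp||V_h||\subset(\supp M)^{\gamma\e}$,
\[
\e^{n}\,||V_h||\ast\xi_{\e}(x)\ \geq\ \e^{n}\,||M||\ast\xi_{\e}(x)-\e^{n}\,\big|\,||M||\ast\xi_{\e}(x)-||V_h||\ast\xi_{\e}(x)\,\big|\ \geq\ c_7\,\e^{d}-C_0\,\lip(\xi)\,h\,\e^{d-1},
\]
and since $2h\leq\gamma\e$ and $c_7=\beta C_0^{-1}2^{-2d-1}$, the chosen size of $\beta$ forces $C_0\,\lip(\xi)\,h\,\e^{d-1}\leq\tfrac12 c_7\,\e^{d}$, so that $\e^{n}\,||V_h||\ast\xi_{\e}(x)\geq\tfrac12 c_7\,\e^{d}$ on $\supp||V_h||$. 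This uniform positivity of the denominator keeps $|H_{\e}(\cdot,V_h)|\leq c\,\e^{-1}$ under control, and then, splitting
\[
H_{\e}(x,M)-H_{\e}(x,V_h)=\frac{\delta M\ast\rho_{\e}(x)-\delta V_h\ast\rho_{\e}(x)}{||M||\ast\xi_{\e}(x)}+\delta V_h\ast\rho_{\e}(x)\left(\frac{1}{||M||\ast\xi_{\e}(x)}-\frac{1}{||V_h||\ast\xi_{\e}(x)}\right)
\]
and inserting the four estimates above, every power of $\e$ cancels to leave $|H_{\e}(x,M)-H_{\e}(x,V_h)|\leq c_9\,h\,\e^{-2}$ with $c_9$ depending only on $n,d,\rho,\xi,C_0,C,\gamma,\beta$. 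Collecting constants then yields the asserted inequality, with $c_8$ equal to $||M||(\R^n)$ times a product of $c_9$ with the sup-bounds on $H_{\e}$; the remaining computations are entirely parallel to those in the proofs of Lemma \ref{hebounds} and Proposition \ref{mtov1}.
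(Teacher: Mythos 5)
Your proposal is correct and follows essentially the same route as the paper: you transfer the lower bound on the smoothed mass from $M$ to $V_h$ using the hypothesis relating $\beta$ and $\gamma$, bound the discretization errors of $\|\cdot\|\ast\xi_{\e}$ and $\delta(\cdot)\ast\rho_{\e}$ via Proposition \ref{volumapp}, split $H_{\e}(\cdot,M)-H_{\e}(\cdot,V_h)$ into the same two terms (up to the symmetric choice of which denominator pairs with which numerator), and integrate against $\|V_h\|$ using $\|V_h\|(\R^n)=\|M\|(\R^n)$. Only the bookkeeping of constants differs from the paper's proof.
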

\begin{proof}
We start by estimating: $\big| H_{\e}(z,M) - H_{\e}(z,V_h)\big|$ on $ \supp M^{h}.$ To do so, let $z\in \supp M^h$, let $x \in \supp M$ such that 
$|x-z|\leq h$, we have by \cite[Lemma 4.4]{blm1} with $B= B(x,\e+|x-z|)$ and using that $\xi_{\e}$ is $\e^{-n-1} \lip(\xi)$-Lipschitz, 
\begin{equation}
 \begin{split}
  \e^n||V_h||\ast\xi_{\e}(z) +\e^{-1}\lip(\xi)\left( \Delta_{B}(\M,||V_h||) +|x-z|||M||(B) \right) \geq \e^n \M \ast \xi_{\e}(x)
 \end{split}
\end{equation}
this yields, by Proposition \ref{volumapp} and \eqref{c_7lower} (with $V=M$) we have
\begin{equation}
 \begin{split}
  \e^n||V_h||\ast\xi_{\e}(z) & \geq -2\e^{-1}\lip(\xi) h \M(B)  + \beta C_0^{-1}2^{-2d-1} \e^{d} 
  \\& > -2\e^{-1}\lip(\xi) h C_0(2\e)^d +  \beta C_0^{-1}2^{-2d-1} \e^{d}
  \\& \geq \e^{d}(-\gamma \lip(\xi) 2^d + \beta C_0^{-1}2^{-2d-1}).
 \end{split}
\end{equation}
Thus, if we choose $\gamma$ satisfying $\displaystyle \beta >  \gamma C_0^2 2^{3d+1} \lip(\xi)$, we obtain:
\begin{equation}\label{c_9lower}
  \e^n||V_h||\ast\xi_{\e}(z) \geq c_9 \e^{d}
\end{equation}
where $c_9= \beta - \gamma C_0^2 2^{3d+1} \lip(\xi)$. Next, using Proposition \ref{volumapp}, \eqref{c_7lower}(with $V=M$) and \eqref{c_9lower}, and denoting by $C_2$ the Lipschitz constant of the map $x\in M \mapsto T_xM$,
\begin{equation}
\begin{split}
 \big| H_{\e}(z,M) - H_{\e}(z,V_h)\big|
 & = \Big| \frac{\delta M \ast\rho_{\e}(z)}{\M\ast\xi_{\e}(z)} - \frac{\delta V_h \ast\rho_{\e}(z)}{||V_h||\ast\xi_{\e}(z)} \Big|
\\& \leq 
|\delta M \ast\rho_{\e}(z)| \Big| \frac{1}{\M\ast\xi_{\e}(z)} - \frac{1}{||V_h||\ast\xi_{\e}(z)}\Big| +
\frac{\big|\delta M \ast\rho_{\e}(z)-\delta V_h \ast\rho_{\e}(z)  \big|}{||V_h|| \ast\xi_{\e}(z)} 
\\&\leq
\e^{-1}||\rho'||_{\infty}\M(B) \frac{ h ||\xi'||_{\infty} \M(B)}{ \e^{2n}\M\ast\xi_{\e}(z) ||V_h||\ast\xi_{\e}(z)} +
\frac{\e^{-2}||\rho''||_{\infty}h(1+2C_2)\M(B)}{\e^n ||V_h|| \ast\xi_{\e}(z)}
\\& \leq
 \frac{ \e^{-1}||\rho'||_{\infty}||\xi'||_{\infty} C_0 2^d \e^d h C_0 2^d \e^d}{c_7 \e^d c_9 \e^d} +
\frac{\e^{-2}||\rho''||_{\infty}h(1+2C_2)C_0 2^d\e^d}{c_9 \e^d}
\\& \leq 
\frac{h}{\e}\frac{||\rho'||_{\infty}||\xi'||_{\infty}2^{2d}C_0^2}{c_7c_9}+ \frac{h}{\e^2}\frac{2^d||\rho''||_{\infty}(1+2C_2)C_0}{c_9}.
\end{split}
\end{equation}
Thus, for  
\begin{equation}\label{c_10lower}
 c_{10} = 
\frac{||\rho'||_{\infty}||\xi'||_{\infty}2^{2d}C_0^2}{c_7c_9}+ \frac{2^d||\rho''||_{\infty}(1+2C_2)C_0}{c_9},
\end{equation}
we have  
\begin{equation}\label{stabhepsilon}
 \big| H_{\e}(z,M) - H_{\e}(z,V_h)\big| \leq c_{10}\frac{h}{\e^2} \quad \forall z  \in \supp(M)^h.
\end{equation}
Finally, using \eqref{stabhepsilon}, Lemma \ref{hebounds} with $V=M$ and $||V_h||(\R^n) = ||M||(\R^n)$ we obtain
\begin{equation*}
\begin{split}
&\Big| \int_{\R^n} \varphi_{\e}(z,M) d||V_h||(z) -  \int_{\R^n} \varphi_{\e}(z,V_h) d||V_h||(z) \Big| 
\\& \leq \int_{\R^n} \phi(z) \Big| |H_{\e}(z,M)|^2-|H_{\e}(z,V_h)|^2 \Big|d||V_h||(z) + \int_{\R^n} |\nabla\phi(z)| \big| H_{\e}(z,M)-H_{\e}(z,V_h) \big| d||V_h||(z)
\\& \leq  c_{10} \frac{h}{\e^2}  \int_{\R^n}\phi(z)\left(|H_{\e}(z,M)|+|H_{\e}(z,V_h)|\right)d||V_h||(z)+ c_{10} \frac{h}{\e^2} \int_{\R^n} |\nabla\phi(z)|d||V_h||(z) 
\\&\leq  2c_{10} \frac{h}{\e^2}||\phi||_{\infty}  \int_{\R^n} |H_{\e}(z,M)| d||V_h||(z)  + c_{10}||\phi||_{\infty} \frac{h^2}{\e^4} ||M||(\R^n) +  c_{10} \frac{h}{\e^2} ||\nabla\phi||_{\infty}||M||(\R^n)
\\&
\leq ||\phi||_{\infty} 2 c_5c_{10} ||M||(\R^n) \frac{h}{\e^3} + ||\phi||_{\infty}||M||(\R^n)c_{10} \frac{h^2}{\e^4} + c_{10}  ||\nabla\phi_{\infty}||M||(\R^n) \frac{h}{\e^2}.
\end{split}
\end{equation*}
Thus, 
 \begin{equation}
  \Big| \int_{\R^n} \varphi_{\e}(\cdot,M) d||V_h|| -  \int_{\R^n} \varphi_{\e}(\cdot,V_h) d||V_h||\Big| 
 \leq c_8 ||\phi||_{C^1}\frac{h}{\e^3},
 \end{equation}
where
\begin{equation}\label{c_8lower}
c_8= c_{10}||M||(\R^n)||\phi||_{C^1}\left( 2c_5 + 1  \right),       
\end{equation}
this concludes the proof.
\end{proof}

\begin{proof}[Proof of the Theorem] Let $\Omega$ be a convex bounded open set of $\R^n$, $\e\in(0,1)$, $M$ a closed $C^3$ $d$-submanifold in $\Omega$ and $M(t)$ its mean curvature flow, defined for $t\in[0,T]$. Let  $V_h(t)$ be a volumetric discretization of $M(t)$ of parameter $h$ for all $t\in[0,T]$. Assume the assumptions of Theorem \ref{thmappbrakke} on $\gamma$ and $h$ fulfilled, and that $C_0$, $C_1$ and $C_2$ are uniform on $t$ (this is possible as both the constants and the flow evolves continuously in time w.r.t the $C^3$ distance on the space of $d$-submanifolds and the interval $[0,T]$ is compact). Using $||M(t)||(\R^n) \leq ||M(0)||(\R^n)$ and Propositions \ref{mtov1}, \ref{mtov2}, both with $M=M(t)$, we have:
\begin{equation}
\begin{split}
 &\Big| \int_{t_1}^{t_2}\int_{\R^n} -\phi|H_{\e}(z,M(t))|^2 + \nabla\phi(z) \cdot H_{\e}(z,M(t))  d||M(t)||dt \\& -  \int_{t_1}^{t_2}\int_{\R^n}-\phi|H_{\e}(z,V_h(t))|^2 + \nabla\phi(z) \cdot H_{\e}(z,V_h(t))  d||V_h(t)||dt \Big| 
 \\& \leq (c_4+c_8)(t_2-t_1)||\phi||_{C^2} \frac{h}{\e^3},
\end{split}
\end{equation}
for all $0\leq t_1 \leq t_2 \leq T$ and $\phi \in C^2_c(\R^n,\R^+)$. Combining with \eqref{approximatebrakke2} we get:
\begin{equation}\label{approximatebrakke3}
    \begin{split}
 & \Big| ||V_h(t_2)||(\phi)- ||V_h(t_1)||(\phi) + \int_{t_1}^{t_2}\int_{\R^n} \phi(x) |H_{\e}(x,V_h(t))|^2 - \nabla\phi(x)\cdot H_{\e}(x,V_h(t)) d||V_h(t)||(x)dt \Big| \\&
 \leq 2\lip(\phi)\max\limits_{t\in\lbrace t_1,t_2\rbrace}  \Delta(M(t),V_h(t)) +
||\phi||_{\infty}  C_1 \e (||M(t_1)||(\R^n)-||M(t_2)||(\R^n)) + C(t_2-t_1)||\phi||_{C^2}(\e+ \frac{h}{\e^3}),
\end{split}
\end{equation}
for all $0\leq t_1 \leq t_2 \leq T$ and $\phi \in C^2_c(\R^n,\R^+)$, where $C= c_3+c_4+c_8 $, and we conclude the proof the of our Theorem.
\end{proof}

\section{List of constants used in the paper}
\begin{itemize}
 \item $C_0$ The Ahlfors regularity constant.
 \item $C_1$ Measures how well $H_{\e}$ approximates $H$ \ref{hhepsilon1}.
 \item $C_2$ The Lipschitz constant of the map $y \mapsto T_yM$.
 \item $\gamma$ any constant satisfying: 
 \begin{equation*}\begin{split}
 &\gamma \leq \min \Big\lbrace (8(1+C_0^{2/d}))^{-1} , \max\limits_{x\in M(t)}(\lambda(x))^{-1} , \beta \left(2^{3d}C_0^2(\lip(\xi)+1)\right)^{-1} \Big\rbrace, 
 \end{split}\end{equation*}
 $\lambda(x)$ \text{is the biggest principal curvature at $x$}
 \item $ \displaystyle \beta =  \min \big\lbrace \xi(s) \big| s\in \big[ \frac{C_0^{-2/d}}{4}, \frac{1}{2} \big] \big\rbrace > 0 $.
 \item $\displaystyle c_3 = C_1 ||M(0)||(\R^n) (2+C_1) $. \eqref{approximatebrakke2}
 \item $\displaystyle c_4 =(2\gamma^{-1}(c_5^2+c_5)+2c_5c_6 + c_6) ||M||(\R^n) $
 \item $\displaystyle c_5 =\beta^{-1}C_0^2 2^{3d+1}||\rho'||_{\infty}$
 \item $\displaystyle c_6 =\beta^{-1} C_0^2 2^{3d+1}||\rho'||_{\infty} \left(1+ \beta^{-1} C_0^2 2^{3d+2} ||\xi'||_{\infty} \right) $
 \item $\displaystyle c_7 = \beta C_0^{-1} 2^{-2d-1} $ \eqref{c_7lower}.
 \item $\displaystyle c_8= c_{10}||M||(\R^n)||\phi||_{C^1}\left( 2c_5 + 1  \right) $ \eqref{c_8lower}.
 \item $\displaystyle c_9=\beta - \gamma C_0^2 2^{3d+1} \lip(\xi)$ \eqref{c_9lower}.
 \item $\displaystyle c_{10} = 
\frac{||\rho'||_{\infty}||\xi'||_{\infty}2^{2d}C_0^2}{c_7c_9}+ \frac{2^d||\rho''||_{\infty}(1+2C_2)C_0}{c_9} $ \eqref{c_10lower}.
 \item $\displaystyle C = c_3+c_4+c_8 $ appears in the Brakke approximate inequality \ref{thmappbrakke}.
 \item $\displaystyle C'=||M(0)||(\R^n)(2+C_1)+CT $ appears in the weak version of the Brakke approximate inequality \ref{thmappbrakke1}.
\end{itemize}

\newpage
\bibliographystyle{abbrv}
\bibliography{these}

\begin{thebibliography}{10}

\bibitem{bel}
G.~Bellettini.
\newblock {\em Lecture notes on mean curvature flow: barriers and singular
  perturbations}, volume~12.
\newblock Springer, 2014.

\bibitem{brakke}
K.~Brakke.
\newblock The motion of a surface by its mean curvature.
\newblock {\em Mathematical notes (20),Princeton University}, 1978.

\bibitem{buet}
B.~Buet.
\newblock {\em Approximation de surfaces par des varifolds discrets:
  repr{\'e}sentation, courbure, rectifiabilit{\'e}}.
\newblock PhD thesis, Universit{\'e} Claude Bernard-Lyon I, 2014.

\bibitem{blm1}
B.~Buet, G.~P. Leonardi, and S.~Masnou.
\newblock A varifold approach to surface approximation.
\newblock {\em Archive for Rational Mechanics and Analysis}, 226:639--694,
  2017.

\bibitem{br}
B.~Buet and M.~Rumpf.
\newblock Mean curvature motion of point cloud varifolds.
\newblock {\em ESAIM: Mathematical Modelling and Numerical Analysis},
  56(5):1773--1808, 2022.

\bibitem{cgg}
Y.-G. Chen, Y.~Giga, and S.~Goto.
\newblock Uniqueness and existence of viscosity solutions of generalized mean
  curvature flow equations.
\newblock 1989.

\bibitem{eckhui}
K.~Ecker and G.~Huisken.
\newblock Mean curvature evolution of entire graphs.
\newblock {\em Annals of Mathematics}, 130(3):453--471, 1989.

\bibitem{eidel}
S.~D. Eidelman.
\newblock {\em Parabolic systems}.
\newblock North Holland publishing company, 1969.

\bibitem{evsp1}
L.~C. Evans and J.~Spruck.
\newblock Motion of level sets by mean curvature. i.
\newblock In {\em Fundamental Contributions to the Continuum Theory of Evolving
  Phase Interfaces in Solids: A Collection of Reprints of 14 Seminal Papers},
  pages 328--374. Springer, 1991.

\bibitem{gra}
M.~A. Grayson.
\newblock The heat equation shrinks embedded plane curves to round points.
\newblock {\em Journal of Differential geometry}, 26(2):285--314, 1987.

\bibitem{hui}
G.~Huisken.
\newblock Flow by mean curvature of convex surfaces into spheres.
\newblock {\em Journal of Differential Geometry}, 20(1):237--266, 1984.

\bibitem{Ilmanen}
T.~Ilmanen.
\newblock {\em Elliptic regularization and partial regularity for motion by
  mean curvature}, volume 520.
\newblock American Mathematical Soc., 1994.

\bibitem{simon}
L.~Simon.
\newblock {\em Lectures on Geometric Measure Theory}.
\newblock American Mathematical Society, Providence, RI, 1993.

\bibitem{ton}
Y.~Tonegawa.
\newblock {\em Brakke’s mean curvature flow}.
\newblock Springer, Singapore, 2019.

\end{thebibliography}
\end{document}